\providecommand{\tabularnewline}{\\}
\numberwithin{equation}{section}
\numberwithin{figure}{section}
\theoremstyle{plain}
\newtheorem{thm}{\protect\theoremname}
\theoremstyle{definition}
\newtheorem{defn}[thm]{\protect\definitionname}
\theoremstyle{plain}
\newtheorem{prop}[thm]{\protect\propositionname}
\theoremstyle{remark}
\newtheorem{rem}[thm]{\protect\remarkname}
\theoremstyle{plain}
\newtheorem{lem}[thm]{\protect\lemmaname}
\theoremstyle{plain}
\providecommand{\propositionname}{Proposition}
\providecommand{\theoremname}{Theorem}
\providecommand{\definitionname}{Definition}
\providecommand{\lemmaname}{Lemma}
\providecommand{\propositionname}{Proposition}
\providecommand{\remarkname}{Remark}
\providecommand{\theoremname}{Theorem}
\begin{document}
\title{planktons discrete\,-time dynamical systems}
\author{u. a. rozikov, s. k. shoyimardonov, r. varro }
\begin{abstract}
In this paper, we initiate the study of a discrete-time dynamical
system modelling a trophic network connecting the three types
of plankton (phytoplankton, zooplankton, mixoplankton) and bacteria.
The nonlinear operator $V$ associated with this dynamical system
is of type 4-Volterra quadratic stochastic operator (QSO) with
twelve parameters.  We give conditions on the parameters under which
this operator maps the five-dimensional
standard simplex to itself and we find
its fixed points. Moreover, we study the limit points of trajectories for this
operator. For each situations we give
some biological interpretations.
\end{abstract}

\subjclass[2000]{34D20 (92D25).}
\keywords{Quadratic stochastic operator, Volterra operator, $\ell$-Volterra
operator, plankton, mixoplankton, ocean ecosystem.}
\address{Utkir Rozikov. \ \  V.I.Romanovskiy institute of mathematics, 81, Mirzo Ulug'bek str.,
100125, Tashkent, Uzbekistan.}
\email{rozikovu@yandex.ru}
\address{Sobirjon Shoyimardonov. \ \ V.I.Romanovskiy institute of mathematics, 81, Mirzo Ulug'bek str.,
100125, Tashkent, Uzbekistan.}
\email{shoyimardonov@inbox.ru}
\address{Richard Varro. \foreignlanguage{english}{Institut Montpelliérain
Alexander Grothendieck, Université de Montpellier, CNRS, Place
Eugène Bataillon - 35095 Montpellier, France.}}
\email{\selectlanguage{english}%
{\footnotesize{}richard.varro@umontpellier.fr}}

\maketitle
\selectlanguage{british}%

\section{Introduction}

The conventional view of species divides organisms into two distinct
classes:
\begin{itemize}
\item {\it Autotroph} species like
plants, are able to synthesize all the organic matter that composes
them from mineral elements taken from the environment.
\item {\it Heterotroph}
organisms like animals, must absorb preformed organic matter
present in their environment to constitute the organic matter
that composes them.

Beside these two modes of trophism there
is a third:
\item {\it The mixotrophy}, that combines the other two modes.
Mixothroph organisms are able to constitute their organic matter
by autotrophy as well as by heterotrophy. Mixothrophy is very
rare among terrestrial organisms, but since the 1980s many studies
have shown that mixothrophy is very abundant in ocean plankton
\cite{Stoecker-17}.
\end{itemize}
Plankton plays important role in marine ecosystems, it provides the
base for the aquatic food chain \cite{Flynn-Mitra-1,Leles,Stickney}.

Generally\footnote{see https://en.wikipedia.org/wiki/Phytoplankton \ \ and references therein.}
small or microscopic, the plankton organisms are capable
of limited movement, but unable to move against the current and
drift with the currents.

There are two types of plankton: plant-like
plankton also called {\it phytoplankton} and animal-like plankton called
{\it zooplankton}.

Phytoplankton mainly consist of unicellular photosynthetic
organisms, therefore autotroph organisms absorbing mineral elements
such as nitrogen, phosphorus, calcium, iron and transform these
elements into organic matter using energy from sunlight. It is
estimated that phytoplankton contributes about half of the photosynthesis
on the planet and absorbs one-third of the carbon dioxide.

Zooplankton includes holoplankton or permanent zooplankton composed
of organisms that are born, reproduce and die as zooplankton
and meroplankton or temporary zooplankton consisting of eggs
and larvae of many species (fish, crustaceans, shells ...) that
leave the plankton stage when they metamorphose. Zooplankton
feed on organic matter, phytoplankton or zooplankton.

Research over the last decade showed that zooplankton is subdivided into
protozooplankton (incapable of phototrophy) and mixoplankton
(capable of phototrophy and phagotrophy). Long regarded as a
curiosity, it appears today that mixotrophy is common \cite{Flynn-Mitra-2}
and that many organisms considered as phytoplankton are mixotroph
and that 50\% of zooplankton is mixoplankton. Mixoplankton is
subdivided into three functional groups: constitutive mixoplankton
whose physiology allows photosynthesis, general non-constitutive
mixoplankton which extracts the organelles necessary for photosynthesis
in their preys and specialist non-constitutive mixoplankton which
attack specific prey to extract certain parts allowing photosynthesis.

When they die, phytoplankton, zooplankton and mixoplankton elements
are decomposed into dissolved organic matter (DOM) that are consumed
and transformed by bacteria into dissolved inorganic matter (DIM)
that are in turn consumed by phytoplankton, mixoplankton and
bacteria.

This cycle \cite{Mitra-16} is summarized in the following diagram.

\begin{center}
\begin{tikzpicture}[scale=1.0]
\node (P) at (-3,0) {Phyto.};
\node (M) at (-1.5,-1.8) {Mixo.};
\node (Z) at (-1.5,1.8) {Zoo.};
\node (DOM) at (0,0) {DOM};
\node (B) at (2,0) {Bact.};
\node (DIM) at (5,0) {DIM};
\draw[->,>=latex] (P) -- (M);
\draw[->,>=latex] (P) -- (Z);
\draw [dashed][->,>=latex] (P) -- (DOM);
\draw [dashed][->,>=latex] (M) -- (DOM);
\draw [dashed][->,>=latex] (Z) -- (DOM);
\draw [dashed][->,>=latex] (M) to[bend right=10] (DIM);
\draw[->,>=latex] (DIM) to[bend left=20] (M);
\draw[->,>=latex] (DOM) -- (B);
\draw [dashed][->,>=latex] (Z) to[bend left=10] (DIM);
\draw [dashed][->,>=latex] (B) to[bend left=10] (DIM);
\draw[->,>=latex] (DIM) to[bend left=10] (B);
\draw[->,>=latex] (DIM) to[out=105,in=90] (P);
\end{tikzpicture}
\par\end{center}

If the quantities, measured by concentrations or weights, of
phytoplankton, zooplankton, mixoplankton, bacteria, dissolved
organic matter and dissolved inorganic matter are respectively
denoted $P$, $Z$, $M$, $B$, $O$ and $I$. And if we denote
$a_{1}$, \dots, $a_{12}$ the transfer rates between the different
compartments according to the graph below.
\begin{center}
\begin{tikzpicture}[scale=1.0]
\node (P) at (-3,0) {$P$};
\node (Z) at (-1.5,1.8) {$Z$};
\node (M) at (-1.5,-1.8) {$M$};
\node (COD) at (0,0) {$O$};
\node (B) at (2,0) {$B$};
\node (COI) at (5,0) {$I$};
\draw[->,>=latex] (P) -- (M) node[midway,right] {$a_{3}$};
\draw[->,>=latex] (P) -- (Z) node[midway,right] {$a_{2}$};
\draw [dashed][->,>=latex] (P) -- (COD) node[midway,below] {$a_{4}$};
\draw [dashed][->,>=latex] (M) -- (COD) 	node[midway,right] {$a_{9}$};
\draw [dashed][->,>=latex] (Z) -- (COD) 	node[midway,right] {$a_{5}$};
\draw[->,>=latex] (COI) to[bend left=20] (M);
    \draw (1,-1.8) node[below]{$a_{7}$};
\draw [dashed][->,>=latex] (M) to[bend right=10] (COI);
	\draw (1,-1) node[below]{$a_{8}$};
\draw[->,>=latex] (COD) -- (B) node[midway,below] {$a_{10}$};
\draw [dashed][->,>=latex] (Z) to[bend left=10] (COI) ;
    \draw (1,1.4) node[below]{$a_{6}$};
\draw [dashed][->,>=latex] (B) to[bend left=10] (COI);
	\draw (3.5,0.6) node[below]{$a_{12}$};
\draw[->,>=latex] (COI) to[bend left=10] (B);
    \draw (3.5,-0.1) node[below]{$a_{11}$};
\draw[->,>=latex] (COI) to[out=105,in=90] (P) ;
    \draw (1,2.5) node[below]{$a_{1}$};
\end{tikzpicture}
\par\end{center}

Then the corresponding model is
\begin{equation}
\begin{cases}
\frac{dP}{dt} & =a_{1}PI-a_{2}PZ-a_{3}PM-a_{4}P\\[2mm]
\frac{dZ}{dt} & =a_{2}PZ-a_{5}Z-a_{6}Z\\[2mm]
\frac{dM}{dt} & =a_{3}PM+a_{7}MI-a_{8}M-a_{9}M\\[2mm]
\frac{dO}{dt} & =a_{4}P+a_{5}Z+a_{9}M-a_{10}BO\\[2mm]
\frac{dB}{dt} & =a_{10}BO+a_{11}BI-a_{12}B\\[2mm]
\frac{dI}{dt} & =a_{6}Z+a_{8}M+a_{12}B-a_{1}PI-a_{7}MI-a_{11}BI
\end{cases}\label{eq:Eq1}
\end{equation}

\begin{rem} We note that the study of the role of mixoplankton is very
recent and the system (\ref{eq:Eq1}) is not analytically studied yet.
To the best of our knowledge this paper is a first exploration of the field.
It remains to be studied what happens when consumption rates are higher than death rates.
\end{rem}

For simplicity we have assumed that the dissolved inorganic matter
produced by bacteria is immediately assimilated and available
for food.

We notice that $\frac{d}{dt}\left(P+Z+M+O+B+I\right)=0$, from
this we deduce that the total weight of this closed system remains
constant over time, by convention we put $P+Z+M+O+B+I=1$. Coefficients
$a_{4}$, $a_{5}$, $a_{9}$ are decomposition rates in dissolved
organic matter and $a_{6}$, $a_{8}$, $a_{12}$ decomposition
rates in dissolved inorganic matter, we can assume that these
coefficients are strictly positive and interpret $a_{4}$, $a_{5}+a_{6}$,
$a_{8}+a_{9}$, $a_{12}$ as death rates. Similarly, the coefficients
$a_{1}$, $a_{2}$, $a_{3}$, $a_{7}$, $a_{10}$ and $a_{11}$
being consumption rates, we can also assume that these coefficients
are also strictly positive.

\section{$\ell$-Volterra Quadratic Stochastic Operators}

\emph{The quadratic stochastic operator} (QSO) \cite{GMR}, \cite{L} is
a mapping of the standard simplex.
\begin{equation}
S^{m-1}=\{x=(x_{1},...,x_{m})\in\mathbb{R}^{m}:x_{i}\geq0,\sum\limits _{i=1}^{m}x_{i}=1\}\label{2}
\end{equation}
into itself, of the form
\begin{equation}
V:x'_{k}=\sum\limits _{i=1}^{m}\sum\limits _{j=1}^{m}P_{ij,k}x_{i}x_{j},\qquad k=1,...,m,\label{3}
\end{equation}
where the coefficients $P_{ij,k}$ satisfy the following conditions
\begin{equation}
P_{ij,k}\geq0,\quad P_{ij,k}=P_{ji,k},\quad\sum\limits _{k=1}^{m}P_{ij,k}=1,\qquad(i,j,k=1,...,m).\label{4}
\end{equation}

Thus, each quadratic stochastic operator $V$ can be uniquely
defined by a cubic matrix $\mathbb{P}=(P_{ij,k})_{i,j,k=1}^{m}$
with conditions (\ref{4}).

Note that each element $x\in S^{m-1}$ is a probability distribution
on $\left\llbracket 1,m\right\rrbracket =\{1,...,m\}.$
Each such distribution can be interpreted as a state of the corresponding
biological system.

For a given $\lambda^{(0)}\in S^{m-1}$ the \emph{trajectory}
(orbit) $\{\lambda^{(n)};n\geq0\}$ of $\lambda^{(0)}$ under
the action of QSO (\ref{3}) is defined by
\[
\lambda^{(n+1)}=V(\lambda^{(n)}),\;n=0,1,2,...
\]

The main problem in mathematical biology consists in the
study of the asymptotical behaviour of the trajectories. The
difficulty of the problem depends on given matrix $\mathbb{P}$.
One of simple cases is Volterra QSO, i.e., the matrix $\mathbb{P}$
satisfies
\begin{equation}
P_{ij,k}=0\text{ if }k\notin\{i,j\}\mbox{ for any }i,j\in\left\llbracket 1,m\right\rrbracket .\label{v}
\end{equation}
In \cite{RGan} the theory for Volterra QSO was developed by
using the theories of the Lyapunov function and of tournaments.
But non-Volterra QSO (i.e., not satisfying condition (\ref{v}))
were not exhaustively studied, because there is no general theory
that can be applied to the study of non-Volterra operators (see \cite{GMR} for a review).
In this paper we reduce our operator to a non-Volterra operator which is known as a $\ell$-Volterra QSO (see \cite{RZ}, \cite{Roz1}, \cite{Roz4}):

\emph{$\ell$-Volterra QSO}.
Fix $\ell\in\left\llbracket 1,m\right\rrbracket $ and assume
that elements $P_{ij,k}$ of the matrix $\mathbb{P}$ satisfy
\begin{equation}
\begin{array}{l}
P_{ij,k}=0{\rm \text{ if }}k\notin\{i,j\}\mbox{ for any }k\in\{1,...,\ell\},i,j\in\left\llbracket 1,m\right\rrbracket ;\\[3mm]
P_{ij,k}>0\mbox{ for at least one pair }(i,j),i\neq k,j\neq k\\
\hspace{14mm}\mbox{ for any }k\in\{\ell+1,...,m\}.
\end{array}\label{7}
\end{equation}

\begin{defn}
\label{d1} \cite{RZ} For any fixed $\ell\in\left\llbracket 1,m\right\rrbracket $,
the QSO defined by (\ref{3}), (\ref{4}) and (\ref{7}) is called
$\ell$-Volterra QSO.
\end{defn}

\begin{defn}
A QSO $V$ is called regular if for any initial point $\lambda^{(0)}\in S^{m-1}$,
the limit
\[
\lim_{n\to\infty}V^{n}(\lambda^{(0)})
\]
exists, where $V^n$ denotes $n$-fold composition of $V$ with itself (i.e. $n$ time iterations of $V$).
\end{defn}

\section{Reduction to 2-Volterra QSO}

In this paper we study the discrete time dynamical systems associated to the system
(\ref{eq:Eq1}).

For simplicity of notations we denote
$$x_{1}=P, \ \ x_{2}=Z, \ \ x_{3}=M, \ \ x_{4}=B, \ \ x_{5}=O, \ \ x_{6}=I.$$
Define the evolution operator $V$ by $
V:S^{5}\rightarrow\mathbb{R}^{6},\quad\left(x_{1},\ldots,x_{6}\right)\mapsto\left(x'_{1},\ldots,x'_{6}\right)
$
\begin{equation}
V:\left\{ \begin{alignedat}{1}x'_{1} & =x_{1}\left(1-a_{4}+a_{1}x_{6}-a_{2}x_{2}-a_{3}x_{3}\right)\\
x'_{2} & =x_{2}\left(1-a_{5}-a_{6}+a_{2}x_{1}\right)\\
x'_{3} & =x_{3}\left(1-a_{8}-a_{9}+a_{3}x_{1}+a_{7}x_{6}\right)\\
x'_{4} & =x_{4}\left(1-a_{12}+a_{10}x_{5}+a_{11}x_{6}\right)\\
x'_{5} & =x_{5}+a_{4}x_{1}+a_{5}x_{2}+a_{9}x_{3}-a_{10}x_{4}x_{5}\\
x'_{6} & =x_{6}\left(1-a_{1}x_{1}-a_{7}x_{3}-a_{11}x_{4}\right)+a_{6}x_{2}+a_{8}x_{3}+a_{12}x_{4}
\end{alignedat}
\right.\label{disc}
\end{equation}

Note that the operator $V$ has a form of 4-Volterra QSO, but the
parameters of this operator are not related to $P_{ij,k}$. Here
to make some relations with $P_{ij,k}$ we find conditions on
parameters of (\ref{disc}) rewriting it in the form (\ref{3})
(as in \cite{RSH}). Using $x_{1}+\cdots+x_{6}=1$ we change
the form of the operator (\ref{disc}) as following:
\[
V:\left\{ \begin{alignedat}{1}x'_{1} & =x_{1}\left((1-a_{4})(x_{1}+\cdots+x_{6})+a_{1}x_{6}-a_{2}x_{2}-a_{3}x_{3}\right)\\
x'_{2} & =x_{2}\left((1-a_{5}-a_{6})(x_{1}+\cdots+x_{6})+a_{2}x_{1}\right)\\
x'_{3} & =x_{3}\left((1-a_{8}-a_{9})(x_{1}+\cdots+x_{6})+a_{3}x_{1}+a_{7}x_{6}\right)\\
x'_{4} & =x_{4}\left((1-a_{12})(x_{1}+\cdots+x_{6})+a_{10}x_{5}+a_{11}x_{6}\right)\\
x'_{5} & =(x_{5}+a_{4}x_{1}+a_{5}x_{2}+a_{9}x_{3})(x_{1}+\cdots+x_{6})-a_{10}x_{4}x_{5}\\
x'_{6} & =x_{6}\left(x_{1}+\cdots+x_{6}-a_{1}x_{1}-a_{7}x_{3}-a_{11}x_{4}\right)+\\
 & \quad(a_{6}x_{2}+a_{8}x_{3}+a_{12}x_{4})(x_{1}+\cdots+x_{6})
\end{alignedat}
\right.
\]

From this system and QSO (\ref{3}) for the case $m=6$ we obtain the
following relations between $P_{ij,k}$:

\begin{equation}
\begin{array}{cccc}
\begin{aligned}{\scriptstyle P_{11,1}} & ={\scriptstyle 1-a_{4}},\\
{\scriptstyle 2P_{14,1}} & ={\scriptstyle 1-a_{4}},\\
{\scriptstyle 2P_{12,2}} & ={\scriptstyle 1+a_{2}-a_{5}-a_{6}},\\
{\scriptstyle 2P_{24,2}} & ={\scriptstyle 1-a_{5}-a_{6}},\\
{\scriptstyle 2P_{13,3}} & ={\scriptstyle 1+a_{3}-a_{8}-a_{9}},\\
{\scriptstyle 2P_{34,3}} & ={\scriptstyle 1-a_{8}-a_{9}},\\
{\scriptstyle 2P_{14,4}} & ={\scriptstyle 1-a_{12}},\\
{\scriptstyle P_{44,4}} & ={\scriptstyle 1-a_{12}},\\
{\scriptstyle P_{11,5}} & ={\scriptstyle a_{4}},\\
{\scriptstyle 2P_{14,5}} & ={\scriptstyle a_{4}},\\
{\scriptstyle P_{22,5}} & ={\scriptstyle a_{5}},\\
{\scriptstyle 2P_{25,5}} & ={\scriptstyle 1+a_{5}},\\
{\scriptstyle 2P_{34,5}} & ={\scriptstyle a_{9}},\\
{\scriptstyle 2P_{45,5}} & ={\scriptstyle 1-a_{10}},\\
{\scriptstyle 2P_{12,6}} & ={\scriptstyle a_{6}},\\
{\scriptstyle 2P_{16,6}} & ={\scriptstyle 1-a_{1}},\\
{\scriptstyle 2P_{24,6}} & ={\scriptstyle a_{6}+a_{12}},\\
{\scriptstyle P_{33,6}} & ={\scriptstyle a_{8}},\\
{\scriptstyle 2P_{36,6}} & ={\scriptstyle 1-a_{7}+a_{8}},\\
{\scriptstyle 2P_{46,6}} & ={\scriptstyle 1-a_{11}+a_{12}},\\

\end{aligned}

\begin{aligned}{\scriptstyle 2P_{12,1}} & ={\scriptstyle 1-a_{2}-a_{4}},\\
 {\scriptstyle 2P_{15,1}} & ={\scriptstyle 1-a_{4}},\\
 {\scriptstyle P_{22,2}} & ={\scriptstyle 1-a_{5}-a_{6}},\\
 {\scriptstyle 2P_{25,2}} & ={\scriptstyle 1-a_{5}-a_{6}},\\
{\scriptstyle 2P_{23,3}} & ={\scriptstyle 1-a_{8}-a_{9}},\\
{\scriptstyle 2P_{35,3}} & ={\scriptstyle 1-a_{8}-a_{9}},\\
{\scriptstyle 2P_{24,4}} & ={\scriptstyle 1-a_{12}},\\
{\scriptstyle 2P_{45,4}} & ={\scriptstyle 1+a_{10}-a_{12}},\\
{\scriptstyle 2P_{12,5}} & ={\scriptstyle a_{4}+a_{5}},\\
{\scriptstyle 2P_{15,5}} & ={\scriptstyle 1+a_{4}},\\
{\scriptstyle 2P_{23,5}} & ={\scriptstyle a_{5}+a_{9}},\\
{\scriptstyle 2P_{26,5}} & ={\scriptstyle a_{5}},\\
{\scriptstyle 2P_{35,5}} & ={\scriptstyle 1+a_{9}},\\
{\scriptstyle P_{55,5}} & ={\scriptstyle 1},\\
{\scriptstyle 2P_{13,6}} & ={\scriptstyle a_{8}},\\
{\scriptstyle P_{22,6}} & ={\scriptstyle a_{6}},\\
{\scriptstyle 2P_{25,6}} & ={\scriptstyle a_{6}},\\
{\scriptstyle 2P_{34,6}} & ={\scriptstyle a_{8}+a_{12}},\\
{\scriptstyle P_{44,6}} & ={\scriptstyle a_{12}},\\
{\scriptstyle 2P_{56,6}} & ={\scriptstyle 1},\\

\end{aligned}

 & \begin{aligned}{\scriptstyle 2P_{13,1}} & ={\scriptstyle 1-a_{3}-a_{4}},\\
 {\scriptstyle 2P_{16,1}} & ={\scriptstyle 1+a_{1}-a_{4}},\\
 {\scriptstyle 2P_{23,2}} & ={\scriptstyle 1-a_{5}-a_{6}},\\
{\scriptstyle 2P_{26,2}} & ={\scriptstyle 1-a_{5}-a_{6}},\\
{\scriptstyle P_{33,3}} & ={\scriptstyle 1-a_{8}-a_{9}},\\
{\scriptstyle 2P_{36,3}} & ={\scriptstyle 1+a_{7}-a_{8}-a_{9}},\\
{\scriptstyle 2P_{34,4}} & ={\scriptstyle 1-a_{12}},\\
{\scriptstyle 2P_{46,4}} & ={\scriptstyle 1+a_{11}-a_{12}},\\
{\scriptstyle 2P_{13,5}} & ={\scriptstyle a_{4}+a_{9}},\\
{\scriptstyle 2P_{16,5}} & ={\scriptstyle a_{4}},\\
{\scriptstyle 2P_{24,5}} & ={\scriptstyle a_{5}},\\
{\scriptstyle P_{33,5}} & ={\scriptstyle a_{9}},\\
{\scriptstyle 2P_{36,5}} & ={\scriptstyle a_{9}},\\
{\scriptstyle 2P_{56,5}} & ={\scriptstyle 1},\\
{\scriptstyle 2P_{14,6}} & ={\scriptstyle a_{12}},\\
{\scriptstyle 2P_{23,6}} & ={\scriptstyle a_{6}+a_{8}},\\
{\scriptstyle 2P_{26,6}} & ={\scriptstyle 1+a_{6}},\\
{\scriptstyle 2P_{35,6}} & ={\scriptstyle a_{8}},\\
{\scriptstyle 2P_{45,6}} & ={\scriptstyle a_{12}},\\
{\scriptstyle P_{66,6}} & ={\scriptstyle 1},\\

\end{aligned}
\end{array}\label{par}
\end{equation}

${\scriptstyle \text{other }} {\scriptstyle P_{ij,k}=0}.$

\begin{prop}\label{pc}
We have $V\left(S^{5}\right)\subset S^{5}$ if the
strictly positive parameters $a_{1}, \ldots, a_{12}$
verify the following conditions
\begin{equation}
\begin{array}{cccc}
a_{1}\leq1, & a_{6}\leq1, & a_{10}\leq1, & a_{12}\leq1,\medskip\\
a_{2}+a_{4}\leq1, & a_{3}+a_{4}\leq1, & a_{5}+a_{6}\leq1, & a_{8}+a_{9}\leq1,\medskip\\
 & \left|a_{7}-a_{8}\right|\leq1, & \left|a_{11}-a_{12}\right|\leq1.
\end{array}\label{cond}
\end{equation}

Moreover, under conditions (\ref{cond}) the operator $V$ is
a 4-Volterra QSO.
\end{prop}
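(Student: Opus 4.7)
The proposition asserts two things: that $V(S^{5})\subset S^{5}$, and that under the same hypotheses $V$ takes the form of a 4-Volterra QSO. Both will be verified directly from the coefficients $P_{ij,k}$ already computed in display (\ref{par}): the first by checking the positivity and stochasticity conditions (\ref{4}), and the second by inspecting where the indices are allowed to be non-zero according to (\ref{7}) with $\ell=4$.

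For the inclusion $V(S^{5})\subset S^{5}$ I would first note that the rewriting preceding (\ref{par}) is obtained by multiplying each linear factor in (\ref{disc}) by $x_{1}+\cdots+x_{6}=1$; this leaves $V$ unchanged on $S^{5}$ but expresses each $x'_{k}$ as a homogeneous quadratic form whose symmetrised coefficients are precisely those tabulated in (\ref{par}). Three properties then need to be verified. Symmetry $P_{ij,k}=P_{ji,k}$ is automatic from the symmetrisation of the cross terms $x_{i}x_{j}$. Stochasticity $\sum_{k}P_{ij,k}=1$ reduces to summing the table pair by pair over the eighteen unordered pairs $\{i,j\}\subseteq\{1,\dots,6\}$; in each case the parameters $a_{r}$ appear with opposite signs across different rows of (\ref{par}) and cancel, leaving $1$. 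The substantive content of the hypothesis (\ref{cond}) enters only in the third property, non-negativity $P_{ij,k}\geq 0$.

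The non-negativity check is a scan through the roughly sixty entries of (\ref{par}). Most of them have the form $a_{r}$, $a_{r}+a_{s}$ or $1+a_{r}$ and are trivially non-negative since every $a_{r}>0$. The binding entries are precisely the ones producing (\ref{cond}): the entries $1-a_{4}$, $1-a_{2}-a_{4}$ and $1-a_{3}-a_{4}$ force $a_{2}+a_{4}\leq 1$ and $a_{3}+a_{4}\leq 1$; the entries of the form $1-a_{5}-a_{6}$ force $a_{5}+a_{6}\leq 1$; those of the form $1-a_{8}-a_{9}$ force $a_{8}+a_{9}\leq 1$; $1-a_{12}$ forces $a_{12}\leq 1$; $2P_{45,5}=1-a_{10}$ forces $a_{10}\leq 1$; $2P_{16,6}=1-a_{1}$ forces $a_{1}\leq 1$; and finally the three entries $1-a_{7}+a_{8}$, $1+a_{11}-a_{12}$, $1-a_{11}+a_{12}$ yield the two-sided bounds $|a_{7}-a_{8}|\leq 1$ and $|a_{11}-a_{12}|\leq 1$. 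Once every entry is accounted for, the requirements (\ref{4}) are met and the inclusion follows.

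For the 4-Volterra claim I would simply read off (\ref{par}): every listed non-zero $P_{ij,k}$ with $k\in\{1,2,3,4\}$ has $k\in\{i,j\}$, so the first line of (\ref{7}) holds for these four indices; and the strictly positive entries $P_{11,5}=a_{4}$ and $P_{22,6}=a_{6}$ witness the second line of (\ref{7}) at $k=5$ and $k=6$ respectively, so $\ell=4$. The main obstacle is organisational rather than mathematical: the table (\ref{par}) is long and a single miscounted sign or omitted term would corrupt either the stochasticity sums or the non-negativity inventory, so the verification must be carried out systematically, entry by entry and pair by pair.
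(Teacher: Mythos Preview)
Your approach is exactly the one the paper takes: its proof is the single sentence ``The proof can be obtained by using equalities (\ref{par}) and solving inequalities $0\leq P_{ij,k}\leq1$ for each $P_{ij,k}$,'' and you have simply spelled out that verification in detail. The only minor imprecision is that a single entry $1-a_{7}+a_{8}$ gives only one side of $|a_{7}-a_{8}|\leq 1$; the reverse inequality $a_{8}-a_{7}\leq 1$ is implied by $a_{8}+a_{9}\leq 1$ rather than by a separate table entry, but this does not affect correctness.
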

\begin{proof} The proof can be obtained  by using equalities (\ref{par}) and solving
inequalities $0\leq P_{ij,k}\leq1$ for each $P_{ij,k}$.
\end{proof}

\begin{rem}
In the sequel of the paper we consider operator (\ref{disc})
with parameters $a_{1},...,a_{12}$ which satisfy conditions
(\ref{cond}). This operator maps $S^{5}$ to itself and we are
interested to study the behaviour of the trajectory of any initial
point $x\in S^{5}$ under iterations of the operator $V.$
\end{rem}
\begin{rem} The conditions of Proposition \ref{pc} are sufficient for the quadratic operator to
map simplex to itself. A criterion for a quadratic operator with real coefficients $\gamma_{ij,k}$ (not necessary positive)
to map the $(n-1)$-dimensional simplex  to itself is known (\cite{SO}):
\begin{itemize}
\item[i)] $\sum_{k=1}^n\gamma_{ij,k}=1$, for all $i,j=1, \dots, n$;
\item[ii)] $0\leq \gamma_{ii,k}\leq 1$, for all $i,k=1, \dots, n$;
\item[iii)] $-\sqrt{\gamma_{ii,k}\gamma_{jj,k}}\leq \gamma_{ij,k}\leq 1+\sqrt{(1-\gamma_{ii,k})(1-\gamma_{jj,k})}.$
\end{itemize}
\end{rem}
\section{Fixed points of the operator (\ref{disc})}
\begin{defn}
\cite{De}. A fixed point $p$ for $F:\mathbb{R}^{m}\rightarrow\mathbb{R}^{m}$
is called \emph{hyperbolic} if the Jacobian matrix $\textbf{J}=\textbf{J}_{F}$
of the map $F$ at the point $p$ has no eigenvalues on the unit
circle.

There are three types of hyperbolic fixed points:

(1) $p$ is an attracting fixed point if all of the eigenvalues
of $\textbf{J}(p)$ are less than one in absolute value.

(2) $p$ is an repelling fixed point if all of the eigenvalues
of $\textbf{J}(p)$ are greater than one in absolute value.

(3) $p$ is a saddle point otherwise.
\end{defn}

To find fixed points of operator $V$ given by (\ref{disc})
we have to solve $V(x)=x.$

By the following proposition we give all possible fixed points
of the operator $V.$
\begin{prop}\label{fixp}
\label{fp} The operator (\ref{disc}) has the following fixed
points with conditions (\ref{cond}) for parameters:
\begin{itemize}
\item[1)] $\lambda_{1}=\left(0,0,0,0,\lambda,1-\lambda\right)$ with
$\lambda\in[0,1]$.

\item[2)] $\lambda_{2}=\left(0,0,0,1-\frac{a_{12}}{a_{11}},0,\frac{a_{12}}{a_{11}}\right)$
if $a_{12}\leq a_{11}$.

\item[3)] $\lambda_{3}=\left(0,0,\frac{a_{7}a_{12}-a_{9}a_{11}-a_{8}a_{11}}{a_{7}a_{9}}\lambda,\lambda,\frac{a_{7}a_{12}
    -a_{9}a_{11}-a_{8}a_{11}}{a_{7}a_{10}},\frac{a_{9}+a_{8}}{a_{7}}\right)$
if
\[
a_{12}a_{7}-a_{11}(a_{8}+a_{9})\geq0\text{ and }a_{7}\left(a_{10}-a_{12}\right)-\left(a_{8}+a_{9}\right)\left(a_{10}-a_{11}\right)\geq0
\]
where
\[
\lambda=\tfrac{a_{9}\left(a_{7}a_{10}-a_{9}a_{10}-a_{8}a_{10}-a_{7}a_{12}+a_{9}a_{11}+
a_{8}a_{11}\right)}{a_{10}\left(a_{7}a_{9}+a_{7}a_{12}-a_{8}a_{11}-a_{9}a_{11}\right)}.
\]

\item[4)] $\lambda_{4}=\left(\frac{a_{1}a_{12}-a_{4}a_{11}}{a_{1}a_{4}}\lambda,0,0,\lambda,
    \frac{a_{1}a_{12}-a_{4}a_{11}}{a_{1}a_{10}},\frac{a_{4}}{a_{1}}\right)$
if
\[
a_{12}a_{1}-a_{11}a_{4}\geq0\text{ and }a_{10}a_{1}-a_{10}a_{4}-a_{12}a_{1}+a_{11}a_{4}\geq0,
\]
where
\[
\lambda=\tfrac{a_{4}\left(a_{10}a_{1}-a_{10}a_{4}-a_{12}a_{1}+a_{11}a_{4}\right)}{a_{10}\left(a_{12}a_{1}-a_{11}a_{4}+a_{4}a_{1}\right)}.
\]
\item[5)] $\lambda_{5}=\left(\frac{a_{8}+a_{9}-a_{7}x_{6}}{a_{3}},0,\frac{a_{1}\lambda-
    a_{4}}{a_{3}},\frac{a_{4}a_{8}+\left(a_{1}a_{9}-a_{4}a_{7}\right)\lambda}{a_{3}\left(a_{12}-
    a_{11}\lambda\right)},\frac{a_{12}-a_{11}\lambda}{a_{10}},\lambda\right)$
where $\lambda$ is a root of
\begin{equation}\label{qe}
ax^{2}+bx+c=0
\end{equation} where
\begin{align*}
a & =a_{3}a_{11}^{2}+a_{10}a_{11}(a_{7}-a_{1}-a_{3}),\\
b & =a_{10}a_{11}(a_{3}+a_{4}-a_{8}-a_{9})+a_{10}a_{12}(a_{3}+a_{1}-a_{7})+\\
 & \quad a_{10}(a_{1}a_{9}-a_{4}a_{7})-2a_{3}a_{11}a_{12},\\
c & =a_{10}a_{12}(a_{8}+a_{9}-a_{3}-a_{4})+a_{3}a_{12}^{2}+a_{4}a_{8}a_{10},
\end{align*}
if $a_{4}<a_{1}$ and $\lambda$ verify

\begin{tabular}{cl}
$\tfrac{a_{4}}{a_{1}}<\lambda<\min\{\tfrac{a_{12}}{a_{11}},\tfrac{a_{8}+a_{9}}{a_{7}}\}$ & if $a_{1}a_{9}\geq a_{4}a_{7}$,\medskip\tabularnewline
$\frac{a_{4}}{a_{1}}<\lambda<\min\{\frac{a_{12}}{a_{11}},\frac{a_{8}+a_{9}}{a_{7}},\frac{a_{4}a_{8}}{a_{4}a_{7}-a_{1}a_{9}}\}$ & if $a_{1}a_{9}<a_{4}a_{7}$.\tabularnewline
\end{tabular}
\item[6)] $\lambda_{6}=\left(\frac{a_{5}+a_{6}}{a_{2}},\frac{a_{1}\lambda-a_{4}}{a_{2}},0,\frac{a_{4}a_{6}
    +a_{1}a_{5}\lambda}{a_{2}(a_{12}-a_{11}\lambda)},\frac{a_{12}-a_{11}\lambda}{a_{10}},\lambda\right)$
if $a_{4}<a_{1}$ and $\frac{a_{4}}{a_{1}}<\lambda<\frac{a_{12}}{a_{11}}$,
where $\lambda$ is a root of
\begin{equation}\label{qea}
\alpha x^{2}+\beta x+\gamma=0
\end{equation}
 where
\[
\begin{alignedat}{1}\alpha & =a_{2}a_{11}^{2}-a_{10}a_{11}\left(a_{1}+a_{2}\right),\\
\beta & =a_{10}a_{11}\left(a_{2}+a_{4}-a_{5}-a_{6}\right)+a_{10}a_{12}(a_{1}+a_{2})+a_{1}a_{5}a_{10}-2a_{2}a_{11}a_{12},\\
\gamma & =a_{10}a_{12}\left(a_{5}+a_{6}-a_{2}-a_{4}\right)+a_{2}a_{12}^{2}+a_{4}a_{6}a_{10}.
\end{alignedat}
\]
\item[7)] $\lambda_{7}=\left(\frac{a_{5}+a_{6}}{a_{2}},x_{2},\lambda,x_{4},x_{5},\frac{a_{2}\left(a_{8}
    +a_{9}\right)-a_{3}\left(a_{5}+a_{6}\right)}{a_{2}a_{7}}\right)$
where
\[
\begin{alignedat}{1}x_{2} & =\tfrac{a_{1}\left(a_{2}a_{8}+a_{2}a_{9}-a_{3}a_{5}-a_{3}a_{6}\right)
-a_{2}a_{4}a_{7}-a_{2}a_{3}a_{7}\lambda}{a_{2}^{2}a_{7}},\\
x_{4} & =\tfrac{a_{7}\left(a_{4}a_{5}+a_{4}a_{6}+a_{2}a_{5}x_{2}+a_{2}a_{9}\lambda\right)}
{a_{2}a_{7}a_{12}-a_{11}\left(a_{2}a_{8}+a_{2}a_{9}-a_{3}a_{5}-a_{3}a_{6}\right)},\\
x_{5} & =\tfrac{a_{2}a_{7}a_{12}-a_{11}\left(a_{2}a_{8}+a_{2}a_{9}-a_{3}a_{5}-a_{3}a_{6}\right)}{a_{2}a_{7}a_{10}},\\
\lambda& =1-\tfrac{a_{5}+a_{6}}{a_{2}}-x_{2}-x_{4}-x_{5}-\tfrac{a_{2}\left(a_{8}+a_{9}\right)
-a_{3}\left(a_{5}+a_{6}\right)}{a_{2}a_{7}},
\end{alignedat}
\]
if
\[
\tfrac{a_{2}a_{4}a_{7}}{a_{1}}<a_{2}a_{8}+a_{2}a_{9}-a_{3}a_{5}-a_{3}a_{6}<\tfrac{a_{2}a_{7}a_{12}}{a_{11}}
\]
and
\[
\lambda<\tfrac{a_{1}\left(a_{2}a_{8}+a_{2}a_{9}-a_{3}a_{5}-a_{3}a_{6}\right)-a_{2}a_{4}a_{7}}{a_{2}a_{3}a_{7}}.
\]
\end{itemize}
\end{prop}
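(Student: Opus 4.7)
The plan is to solve $V(x) = x$ on $S^5$ by a case analysis forced by the factored structure of the first four fixed-point equations. Writing $x'_i - x_i = 0$ for $i \in \{1,2,3,4\}$ produces $x_i \cdot g_i(x) = 0$ with
\[
g_1 = -a_4 + a_1 x_6 - a_2 x_2 - a_3 x_3, \qquad g_2 = -a_5 - a_6 + a_2 x_1,
\]
\[
g_3 = -a_8 - a_9 + a_3 x_1 + a_7 x_6, \qquad g_4 = -a_{12} + a_{10} x_5 + a_{11} x_6,
\]
so each of $x_1, x_2, x_3, x_4$ is either zero or forces one affine relation among the remaining coordinates. I would case-split on whether $x_2 = 0$ and whether $x_3 = 0$ (four branches) and, inside each, on whether $x_1 = 0$ and whether $x_4 = 0$; the fifth line of (\ref{disc}) then pins $x_5$ (or is vacuous), and the simplex $\sum x_i = 1$ closes the system. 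The identity $\sum_{i=1}^{6}(x'_i - x_i) \equiv 0$, obtained by direct expansion of (\ref{disc}) and mirroring the conservation law $\frac{d}{dt}(P+Z+M+O+B+I) = 0$ for (\ref{eq:Eq1}), shows that the sixth fixed-point equation is automatic once the first five hold on $S^5$.

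The branches map cleanly onto the listed families. In $x_2 = x_3 = 0$, the four sub-branches on $x_1, x_4$ yield $\lambda_1$ (when $x_1 = x_4 = 0$, leaving the edge $x_5 + x_6 = 1$), $\lambda_2$ (when only $x_4 > 0$, forcing $x_5 = 0$ from the fifth line and $x_6 = a_{12}/a_{11}$ from $g_4$), and $\lambda_4$ (when $x_1, x_4 > 0$, reading $x_6$ off $g_1$, $x_5$ off $g_4$, $x_1/x_4$ off the fifth line, and normalising). The branch $x_2 = 0, x_3 > 0$ gives $\lambda_3$ if $x_1 = 0$ (then $g_3$ fixes $x_6$, $g_4$ fixes $x_5$, and the fifth line fixes $x_3/x_4$) and $\lambda_5$ if $x_1 > 0$ (then $g_1, g_3, g_4$ and the fifth line express $x_1, x_3, x_4, x_5$ as rational functions of $x_6$, and the simplex becomes the quadratic (\ref{qe}) in $\lambda := x_6$). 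The symmetric branch $x_2 > 0, x_3 = 0$ gives $\lambda_6$ with the analogous quadratic (\ref{qea}). In the last branch $x_2, x_3 > 0$, the relations $g_2$ and $g_3$ fix $x_1$ and $x_6$ outright, $g_1$, $g_4$ and the fifth line express $x_2, x_5, x_4$ as affine functions of $\lambda := x_3$, and the simplex determines $\lambda$ linearly, giving $\lambda_7$. Sub-branches such as $x_4 = 0$ combined with the fifth line are ruled out by positivity of the parameters.

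The main obstacle is the bookkeeping for $\lambda_5$ and $\lambda_6$: after substituting the rational expressions into the simplex, one has to clear denominators and identify the coefficients of the resulting quadratic with $a, b, c$ (respectively $\alpha, \beta, \gamma$) as written in the proposition; this is direct but lengthy. The remaining technical task is to translate $x \in S^5$ into sharp inequality conditions, which is routine in every case except for the bifurcated upper bound on $\lambda$ in $\lambda_5$: the sign of $a_1 a_9 - a_4 a_7$ is precisely the sign of the coefficient of $\lambda$ in the numerator of $x_4$, so it decides whether positivity of $x_4$ imposes the extra constraint $\lambda < a_4 a_8/(a_4 a_7 - a_1 a_9)$ or not.
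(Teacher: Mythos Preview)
Your proposal is correct and follows essentially the same route as the paper: a case analysis on which of the coordinates $x_1,\dots,x_4$ vanish, using the factored form $x_i\,g_i(x)=0$ for $i\le 4$, the fifth equation, and the simplex constraint to close each branch. The paper organises the split differently (it first disposes of $x_6=0$, then branches on $x_4$ and $x_5$, and finally on $x_1,x_2,x_3$), whereas you split on $(x_2,x_3)$ first and then on $(x_1,x_4)$; your explicit use of the identity $\sum_i(x'_i-x_i)\equiv 0$ to discard the sixth equation is a mild streamlining that the paper leaves implicit, but the substance of each case and the resulting computations are the same.
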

\begin{proof}
Recall that a fixed point of the operator $V$ is a solution
to $V(x)=x.$

If $x_{6}=0$, from $x_{6}^{\left(1\right)}=x_{6}$ in (\ref{disc})
and $a_{6},a_{8},a_{9}>0$ we deduce $x_{2}=x_{3}=x_{4}=0$,
therefore with $x'_{1}=x_{1}$ and $a_{4}>0$ we get $x_{1}=0$
and from $x_{1}+\cdots+x_{6}=0$ we deduce $x_{5}=1$ and the
fixed point $\left(0,0,0,0,1,0\right)$.

We assume now that $x_{6}\neq0$.

1) If $x_{4}=0$, from $x_{5}^{\left(1\right)}=x_{5}$ in (\ref{disc})
we get $x_{1}=x_{2}=x_{3}=0$ and from $x_{5}+x_{5}=1$ we find
fixed points $\lambda_{1}$.\medskip{}

2) If $x_{4}\neq0$ and $x_{5}=0$, from $x_{5}^{\left(1\right)}=x_{5}$
in (\ref{disc}) we get $x_{1}=x_{2}=x_{3}=0$, with this and
$x_{6}^{\left(1\right)}=x_{6}$ we have $x_{6}=\frac{a_{12}}{a_{11}}$
from where $x_{4}=1-x_{6}$, we find the fixed point $\lambda_{2}$.

We assume now $x_{4}\neq0$ and $x_{5}\neq0$.

3) If $x_{1}=0$, from $x_{2}^{\left(1\right)}=x_{2}$ we get
$x_{2}=0$, then from $x_{5}^{\left(1\right)}=x_{5}$ we get
$x_{3}\neq0$, it follows from $x_{3}^{\left(1\right)}=x_{3}$
that $x_{6}=\frac{a_{8}+a_{9}}{a_{7}}$, next with $x_{4}^{(1)}=x_{4}$
we get $x_{5}=\frac{a_{12}-a_{11}x_{6}}{a_{10}}=\frac{(a_{12}a_{7}-a_{11}a_{9}-a_{11}a_{8})}{a_{7}a_{10}}$
and with $x_{5}^{(1)}=x_{5}$ we get $x_{3}=\frac{(a_{12}-a_{11}x_{6})}{a_{9}}x_{4}=\frac{a_{12}a_{7}-a_{11}a_{9}-a_{11}a_{8}}{a_{7}a_{9}}x_{4}$,
and by $x_{1}+\cdots+x_{6}=1$ we obtain $x_{4}=\frac{a_{9}(a_{7}a_{10}-a_{9}a_{10}-a_{8}a_{10}-a_{12}a_{7}+a_{11}a_{9}+a_{11}a_{8})}{a_{10}(a_{12}a_{7}-a_{11}a_{9}-a_{11}a_{8}+a_{9}a_{7})}$,
thus we have the fixed point $\lambda_{3}.$ Here provided that
all parameters are positive and since the sum of them equal to
one, we can say that they are less than one.

4) If $x_{1}\neq0$. In the case $x_{2}=0$ and $x_{3}=0$, let
$x_{4}=\lambda\neq0$, then from $x_{1}^{(1)}=x_{1}$ we get
$x_{6}=\frac{a_{4}}{a_{1}}$, from $x_{4}^{(1)}=x_{4}$ we get
$x_{5}=\frac{a_{12}a_{1}-a_{11}a_{4}}{a_{10}a_{1}}$ and from
$x_{6}^{(1)}=x_{6}$ we deduce $x_{1}=\frac{(a_{12}a_{1}-a_{1}a_{4})}{a_{4}a_{1}}\lambda$.
In all cases $x_{1}+\cdots+x_{6}=1$ implies $\frac{(a_{12}a_{1}-a_{1}a_{4})}{a_{1}a_{4}}\lambda+\lambda+\frac{(a_{12}a_{1}-a_{11}a_{4})}{a_{10}a_{1}}+\frac{a_{4}}{a_{1}}=1$,
from this we find $\lambda=\frac{a_{4}(a_{10}a_{1}-a_{10}a_{4}-a_{12}a_{1}+a_{11}a_{4})}{a_{10}(a_{12}a_{1}-a_{11}a_{4}+a_{4}a_{1})}$
and thus we have $\lambda_{4}.$ By conditions to the parameters,
all coordinates are positive. Moreover, sum of them equal to
one. It means that the fixed point belongs to the simplex $S^{5}.$
We note that by conditions $a_{12}a_{1}-a_{11}a_{4}\geq0$ and
$a_{10}a_{1}-a_{10}a_{4}-a_{12}a_{1}+a_{11}a_{4}\geq0$ we have
$a_{4}\leq a_{1}.$

5) If $x_{1}\neq0$. In the case $x_{2}=0$ and $x_{3}\neq0$,
let $x_{6}=\lambda\neq0$. From $x_{3}^{(1)}=x_{3}$ we get $x_{1}=\frac{a_{8}+a_{9}-a_{7}\lambda}{a_{3}}$,
from $x_{1}^{(1)}=x_{1}$ we deduce $x_{3}=\frac{a_{1}\lambda-a_{4}}{a_{3}}$,
from $x_{4}^{(1)}=x_{4}$ we get $x_{5}=\frac{a_{12}-a_{11}\lambda}{a_{10}}$
and from $x_{5}^{(1)}=x_{5}$ we obtain $x_{4}=\frac{a_{4}x_{1}+a_{9}x_{3}}{a_{10}x_{5}}=\frac{a_{4}a_{8}+\lambda(a_{1}a_{9}-a_{4}a_{7})}{a_{3}(a_{12}-a_{11}\lambda)}$.
By replacing these values in $x_{1}+\cdots+x_{6}=1$ we obtain
a quadratic equation (\ref{qe}) and by solving it we find $\lambda$
and for the positiveness of each coordinate get boundary conditions
for $\lambda$ and thus, we obtain $\lambda_{5}.$

6) Let $x_{1}\neq0$, $x_{2}\neq0$. If $x_{3}=0$, with $x_{6}=\lambda\neq0$,
from $x_{2}^{(1)}=x_{2}$ we deduce $x_{1}=\frac{a_{5}+a_{6}}{a_{2}}$,
from $x_{1}^{(1)}=x_{1}$ we get $x_{2}=\frac{a_{1}\lambda-a_{4}}{a_{2}}$,
from $x_{4}^{(1)}=x_{4}$ it comes $x_{5}=\frac{a_{12}-a_{11}\lambda}{a_{10}}$
and from $x_{5}^{(1)}=x_{5}$ it follows $x_{4}=\frac{a_{4}x_{1}+a_{5}x_{2}}{a_{10}x_{5}}
=\frac{a_{4}a_{6}+a_{1}a_{5}\lambda}{a_{2}(a_{12}-a_{11}\lambda)}$.
Here also by $x_{1}+\cdots+x_{6}=1$ we deduce that $\lambda$
is root of a quadratic equation (\ref{qea}), by solving
it we find $\lambda$ and for the positiveness of each coordinate
we get boundary conditions for $\lambda$ and finally, we get
$\lambda_{6}.$

7) Here more difficult case is when all coordinates of fixed point
are non-zero. Let $x_{1}\neq0,x_{2}\neq0,x_{3}=\lambda\neq0,x_{4}\neq0,x_{5}\neq0,x_{6}\neq0.$
Then from $x_{2}^{(1)}=x_{2}$ we get $x_{1}=\frac{a_{5}+a_{6}}{a_{2}}$,
from $x_{3}^{(1)}=x_{3}$ it follows $x_{6}=\frac{a_{2}(a_{8}+a_{9})-a_{3}(a_{5}+a_{6})}{a_{2}a_{7}}$,
from $x_{1}^{(1)}=x_{1}$ we deduce $x_{2}=\frac{a_{1}x_{6}-a_{4}-a_{3}\lambda}{a_{2}}$,
and from $x_{4}^{(1)}=x_{4}$ and $x_{5}^{(1)}=x_{5}$ we deduce
respectively $x_{5}=\frac{a_{12}-a_{11}x_{6}}{a_{10}}$ and $x_{4}=\frac{a_{4}x_{1}+a_{5}x_{2}+a_{9}\lambda}{a_{12}-a_{11}x_{6}}$
. By $x_{1}+\cdots+x_{6}=1$ we have linear equation with respect
to $\lambda$ and for positiveness of coordinates we get conditions
to the parameters. We note that if we request positiveness, then
by $x_{1}+\cdots+x_{6}=1$ we do not need to show coordinates
less than 1. Thus, we found all possible fixed points of the
operator (\ref{disc}).
\end{proof}
\begin{rem}
We note that all coordinates of each fixed point must belong
to $[0,1]$ and from the vertices of the simplex only $(0,0,0,0,1,0)$
and $(0,0,0,0,0,1)$ can be fixed points (see $\lambda_{1}$).
\end{rem}

Next we study the type of fixed points. First, we find the Jacobian
of the operator (\ref{disc}):

\[
\left[\begin{array}{cccccc}
\tfrac{\partial x_{1}^{(1)}}{\partial x_{1}} & -a_{2}x_{1} & -a_{3}x_{1} & 0 & 0 & a_{1}x_{1}\\
a_{2}x_{2} & \tfrac{\partial x_{2}^{(1)}}{\partial x_{2}} & 0 & 0 & 0 & 0\\
a_{3}x_{3} & 0 & \tfrac{\partial x_{3}^{(1)}}{\partial x_{3}} & 0 & 0 & a_{7}x_{3}\\
0 & 0 & 0 & \tfrac{\partial x_{4}^{(1)}}{\partial x_{4}} & a_{10}x_{4} & a_{11}x_{4}\\
a_{4} & a_{5} & a_{9} & -a_{10}x_{5} & 1-a_{10}x_{4} & 0\\
-a_{1}x_{6} & a_{6} & -a_{7}x_{6}+a_{8} & -a_{11}x_{6}+a_{12} & 0 & \tfrac{\partial x_{6}^{(1)}}{\partial x_{6}}
\end{array}\right]
\]
where
\[
\begin{array}{cc}
\begin{aligned}\tfrac{\partial x_{1}^{(1)}}{\partial x_{1}} & =1-a_{4}+a_{1}x_{6}-a_{2}x_{2}-a_{3}x_{3},\\
\tfrac{\partial x_{3}^{(1)}}{\partial x_{3}} & =1-a_{8}-a_{9}+a_{3}x_{1}+a_{7}x_{6},\\
\tfrac{\partial x_{6}^{(1)}}{\partial x_{6}} & =1-a_{1}x_{1}-a_{7}x_{3}-a_{11}x_{4}.
\end{aligned}
 & \begin{aligned}\tfrac{\partial x_{2}^{(1)}}{\partial x_{2}} & =1-a_{5}-a_{6}+a_{2}x_{1},\\
\tfrac{\partial x_{4}^{(1)}}{\partial x_{4}} & =1-a_{12}+a_{10}x_{5}+a_{11}x_{6},\\
\\
\end{aligned}
\end{array}
\]

By solving simple determinant of 6th order, we can define that
at the fixed point $\lambda_{1}$ the Jacobian has two eigenvalues
equal to 1, so this fixed point is non-hyperbolic.

At the fixed point $\lambda_{2}$ the Jacobian is:
\begin{align*}
\det\left(J(\lambda_{2})-\mu I\right)= & \left(1-\mu\right)\left(1-a_{4}+\tfrac{a_{1}a_{12}}{a_{11}}-\mu\right)\left(1-a_{8}-a_{9}+\tfrac{a_{7}a_{12}}{a_{11}}-\mu\right)\times\\
 & \left(1-a_{5}-a_{6}-\mu\right)\left(1-a_{10}+\tfrac{a_{10}a_{12}}{a_{11}}-\mu\right)\left(1-a_{11}+a_{12}-\mu\right),
\end{align*}
from this, one eigenvalue $\mu_{4}=1$, so the fixed point $\lambda_{2}$
also non-hyperbolic. In the general case, finding the eigenvalues
of Jacobian for the other fixed points is more difficult.\medskip{}

Let $e_{i}=(\delta_{1i},...,\delta_{mi})\in S^{m-1},i=1,...,m$ be
the vertices of the simplex $S^{m-1},$ where $\delta_{ij}$
the Kronecker's symbol, and by $\mathcal{V}_{l}$ we denote the set of
all $l$-Volterra QSOs.
\begin{prop}
\label{pRZ}(\cite{RZ})

\par (1) The vertex $e_{i}$ is a fixed point for an $\ell$-Volterra
QSO if and only if $P_{ii,i}=1,(i=1,...,m).$

\par (2) For any collection $I_{s}=\{e_{i_{1}},...,e_{i_{s}}\}\subset\{e_{l+1},...,e_{m}\},(s\leq m-1)$
there exists a family $\mathcal{V}_{l}(I_{s})\subset\mathcal{V}_{l}$
such that $\{e_{i_{1}},...,e_{i_{s}}\}$ is an $s$-cycle for
each $V\in\mathcal{V}_{l}(I_{s}).$
\end{prop}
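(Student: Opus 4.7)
The approach splits along the two parts of the statement. Part (1) is an immediate unwinding of the definition, while part (2) requires an explicit construction of a family of $\ell$-Volterra QSOs realising the prescribed cycle.

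For (1), I evaluate the QSO \eqref{3} at a vertex $e_i$: since $(e_i)_j = \delta_{ij}$, the double sum collapses to $V(e_i)_k = P_{ii,k}$. Hence $V(e_i) = e_i$ amounts to $P_{ii,k} = \delta_{ik}$ for every $k$. Using the stochastic normalisation $\sum_{k} P_{ii,k} = 1$ from \eqref{4}, the single equality $P_{ii,i} = 1$ already forces all remaining entries $P_{ii,k}$ with $k \neq i$ to vanish, giving the desired equivalence. The $\ell$-Volterra hypothesis plays no role here.

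For (2), I would construct $V \in \mathcal{V}_\ell(I_s)$ by prescribing the diagonal entries along the cycle and leaving the remaining coefficients as free parameters. Setting $P_{i_r i_r, i_{r+1}} = 1$ (indices taken mod $s$) and $P_{i_r i_r, k} = 0$ otherwise yields $V(e_{i_r}) = e_{i_{r+1}}$, so $I_s$ becomes an $s$-cycle. Since every $i_{r+1}$ lies in $\{\ell+1,\dots,m\}$, these assignments are consistent with the first constraint of \eqref{7}, which restricts only the indices $k \leq \ell$. All other coefficients $P_{ij,k}$ are then chosen freely subject to symmetry, nonnegativity, and $\sum_k P_{ij,k}=1$; the collection of admissible choices forms the family $\mathcal{V}_\ell(I_s)$.

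The delicate step is the second part of the $\ell$-Volterra condition in \eqref{7}: for each $k \in \{\ell+1,\dots,m\}$ one needs some pair $(i,j)$ with $i,j \neq k$ satisfying $P_{ij,k} > 0$. For $k = i_r \in I_s$ with $s \geq 2$ this holds automatically, since $P_{i_{r-1} i_{r-1}, i_r}=1$ and $i_{r-1} \neq i_r$. For the remaining $k \in \{\ell+1,\dots,m\}\setminus I_s$, and for the degenerate case $s=1$, I would exhibit a concrete off-cycle pair $(i,j)$ with $i,j \neq k$ and assign a positive weight $P_{ij,k}$, absorbing the excess into the other entries of that row via the row-sum normalisation. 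The main obstacle here is the bookkeeping needed to meet all positivity requirements simultaneously for the distinct values of $k$ while preserving symmetry and row-sum constraints; however, because different symmetric rows $(i,j)$ of the cubic matrix are independent and each one admits a continuum of legal assignments, this can be arranged uniformly, producing a positive-dimensional family $\mathcal{V}_\ell(I_s)$.
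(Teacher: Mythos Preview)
The paper does not give its own proof of this proposition: it is quoted verbatim from the external reference \cite{RZ} and used as a black box, so there is nothing in the present paper to compare your argument against.

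For what it is worth, your proposal is the standard argument and is correct. Part (1) is immediate from evaluating \eqref{3} at a vertex together with the normalisation in \eqref{4}, exactly as you wrote. For part (2), forcing $P_{i_r i_r,\, i_{r+1}}=1$ along the cycle produces $V(e_{i_r})=e_{i_{r+1}}$, and because each $i_{r+1}>\ell$ this never conflicts with the first clause of \eqref{7}. Your treatment of the second clause of \eqref{7} is the only place requiring any care, and you handle it correctly: cycle indices (when $s\ge 2$) are witnessed by the predecessor row, while for the remaining $k>\ell$ one may deposit positive mass $P_{ij,k}$ in any row $(i,j)$ with $i,j\neq k$, which is unobstructed since distinct symmetric rows of the cubic matrix carry independent row-sum constraints.
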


Using this proposition and the conditions to the parameters (\ref{cond})
we formulate the following proposition.
\begin{prop}
\textcompwordmark{}

\par (1) The following vertices $e_{1},e_{2},e_{3}$ and $e_{4}$
can not be fixed points for the QSO (\ref{disc})

\par (2) Vertices $\{e_5, e_6\}$, and any collection $\{e_{i_{1}},...,e_{i_{s}}\}\subset\{e_{1},...,e_{4}\}$, $(1<s\leq 4)$ can not be
a periodic orbit for the QSO (\ref{disc}).
\end{prop}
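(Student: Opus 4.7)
The plan is to handle both parts by direct calculation, invoking Proposition \ref{pRZ}(1) for part (1). Since $V$ has been realised as a $4$-Volterra QSO whose coefficients are tabulated in (\ref{par}), one reads off
\[
P_{11,1}=1-a_{4},\quad P_{22,2}=1-a_{5}-a_{6},\quad P_{33,3}=1-a_{8}-a_{9},\quad P_{44,4}=1-a_{12}.
\]
All of $a_{4}$, $a_{5}+a_{6}$, $a_{8}+a_{9}$, $a_{12}$ are strictly positive under the blanket assumption $a_{j}>0$, so none of these four entries equals $1$. Proposition \ref{pRZ}(1) then rules out each of $e_{1},\dots,e_{4}$ as a fixed point of $V$.

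For part (2), the idea is to evaluate $V$ on each vertex and observe that no image $V(e_{i})$ with $i\in\{1,2,3,4\}$ belongs to the set $\{e_{1},\dots,e_{4}\}$; this already forbids any $s$-cycle with $1<s\le 4$ contained in that set. Substituting $x_{i}=1$ and $x_{j}=0$ for $j\neq i$ into (\ref{disc}) yields
\begin{align*}
V(e_{1})&=(1-a_{4},0,0,0,a_{4},0),\\
V(e_{2})&=(0,1-a_{5}-a_{6},0,0,a_{5},a_{6}),\\
V(e_{3})&=(0,0,1-a_{8}-a_{9},0,a_{9},a_{8}),\\
V(e_{4})&=(0,0,0,1-a_{12},0,a_{12}).
\end{align*}
For each $i\in\{1,2,3,4\}$ the coordinates of $V(e_{i})$ at the positions in $\{1,2,3,4\}\setminus\{i\}$ vanish, so $V(e_{i})$ could only equal some $e_{j}$ with $j\in\{1,\dots,4\}$ if $j=i$; but the $i$-th coordinate of $V(e_{i})$ is strictly less than $1$ by strict positivity of the relevant parameters. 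Hence $V(e_{i})\notin\{e_{1},\dots,e_{4}\}$, which rules out every subcollection of $\{e_{1},\dots,e_{4}\}$ of size $s>1$ as a periodic orbit.

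Finally, for the pair $\{e_{5},e_{6}\}$ a genuine $2$-cycle would require $V(e_{5})=e_{6}$ and $V(e_{6})=e_{5}$. A direct substitution into (\ref{disc}) instead gives $V(e_{5})=e_{5}$ and $V(e_{6})=e_{6}$ (consistent with the family $\lambda_{1}$ of Proposition \ref{fixp}), so both vertices are actually fixed and the pair cannot be a periodic orbit. The whole argument reduces to inspection of (\ref{disc}) and (\ref{par}); there is no genuine obstacle, only the need to track the strict positivity of each individual coefficient $a_{j}$, which is what prevents any of the $i$-th coordinates above from attaining the value $1$.
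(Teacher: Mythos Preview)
Your proof is correct. Part (1) is handled exactly as in the paper, via Proposition \ref{pRZ}(1) and the explicit values of $P_{ii,i}$ from (\ref{par}).

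For part (2) your route diverges slightly. The paper disposes of subcollections $\{e_{i_1},\dots,e_{i_s}\}\subset\{e_1,\dots,e_4\}$ by invoking an external result (Remark 3.4 of \cite{RZ}), whereas you compute $V(e_i)$ explicitly for $i=1,\dots,4$ and observe that each image already has a nonzero fifth or sixth coordinate, so the orbit leaves $\{e_1,\dots,e_4\}$ at the first step. Your argument is fully self-contained and avoids the dependence on \cite{RZ}; the paper's version is shorter but opaque to a reader without that reference. The treatment of $\{e_5,e_6\}$ is identical in both: a direct check that $V(e_5)=e_5$ and $V(e_6)=e_6$.
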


\begin{proof}
(1) By Proposition \ref{pRZ} the vertices $e_{1},e_{2},e_{3}$
and $e_{4}$ are fixed points if and only if $P_{11,1}=1,P_{22,2}=1,P_{33,3}=1$
and $P_{44,4}=1$ respectively. But for the operator (\ref{disc})
all parameters are positive and $P_{ii,i}\neq1$ for any $i=1,2,3,4,$
so $e_{i}$ can not be fixed point for any $i=1,2,3,4.$\\
 (2) By Proposition \ref{pRZ} only the collection $\{e_{5},e_{6}\}$
can be 2-cycle for the 4-Volterra QSO. But for the 4-Volterra
operator (\ref{disc}), $V(e_{5})=e_{5}\neq e_{6}$ and $V(e_{6})=e_{6}\neq e_{5}.$
The remaining part of this assertion follows from Remark 3.4 in \cite{RZ}.
The proposition is proved.
\end{proof}

\section{The limit points of trajectories}

In this section we step by step study the limit behavior of trajectories of initial points $\lambda^{(0)}\in S^5$ under
 operator (\ref{disc}), i.e the sequence $V^n(\lambda^{(0)})$, $n\geq 1$. Note that since $V$ is a continuous operator,
  its trajectories have
as a limit some fixed points obtained in Proposition \ref{fp}.

\subsection{Case no dissolved inorganic matter.}

We study here the case where in the model there is no dissolved
inorganic matter.
\begin{prop}
For an initial point $\lambda^{(0)}=\left(x_{1}^{(0)},x_{2}^{(0)},x_{3}^{(0)},x_{4}^{(0)},x_{5}^{(0)},x_{6}^{(0)}\right)\in S^{5}$
(except fixed points), with $x_{6}^{\left(0\right)}=x_{6}^{\left(1\right)}=0$, the trajectory (under action of operator (\ref{disc}))
has the following limit
\[
\lim_{n\to\infty}{V^{(n)}}(\lambda^{(0)})=\left(0,0,0,0,1,0\right).
\]
\end{prop}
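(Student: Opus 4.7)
The plan is to first reduce the hypothesis $x_6^{(0)} = x_6^{(1)} = 0$ to a much stronger constraint on $\lambda^{(0)}$, and then observe that the dynamics on the resulting invariant face is essentially one-dimensional and linear.

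\textbf{Step 1 (extracting zero coordinates).} From the sixth line of (\ref{disc}),
\[
x_6^{(1)} = x_6^{(0)}\bigl(1 - a_1 x_1^{(0)} - a_7 x_3^{(0)} - a_{11} x_4^{(0)}\bigr) + a_6 x_2^{(0)} + a_8 x_3^{(0)} + a_{12} x_4^{(0)}.
\]
Since $x_6^{(0)}=0$, the assumption $x_6^{(1)}=0$ together with $a_6,a_8,a_{12}>0$ and $x_i^{(0)}\ge 0$ forces $x_2^{(0)} = x_3^{(0)} = x_4^{(0)} = 0$. Hence $\lambda^{(0)} = (x_1^{(0)},0,0,0,x_5^{(0)},0)$ with $x_1^{(0)}+x_5^{(0)}=1$.

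\textbf{Step 2 (invariance of the face).} I would then check, directly from (\ref{disc}), that the face $\{x_2=x_3=x_4=x_6=0\}\cap S^5$ is forward-invariant under $V$: plugging $x_2=x_3=x_4=x_6=0$ into the formulas for $x_2^{(1)},x_3^{(1)},x_4^{(1)},x_6^{(1)}$ yields zero in each case. So, writing $\lambda^{(n)} = V^n(\lambda^{(0)})$, we have $x_2^{(n)} = x_3^{(n)} = x_4^{(n)} = x_6^{(n)} = 0$ for all $n\geq 0$.

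\textbf{Step 3 (reduced dynamics).} On this face the operator collapses to
\[
x_1^{(n+1)} = (1-a_4)\,x_1^{(n)}, \qquad x_5^{(n+1)} = x_5^{(n)} + a_4\,x_1^{(n)},
\]
with the conservation law $x_1^{(n)} + x_5^{(n)} = 1$. The inequality $a_3+a_4\le 1$ of (\ref{cond}) combined with $a_3>0$ gives $0<a_4<1$, so $0 < 1-a_4 < 1$. Therefore $x_1^{(n)} = (1-a_4)^n x_1^{(0)} \to 0$ geometrically, and then $x_5^{(n)} = 1-x_1^{(n)} \to 1$.

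There is no serious obstacle here; the only subtlety is Step 1, which converts the (seemingly mild) hypothesis ``$x_6$ stays zero for one step'' into the much stronger structural condition that four coordinates vanish at the start, after which invariance and convergence are immediate. Steps 2 and 3 are routine verifications from the explicit formulas of $V$.
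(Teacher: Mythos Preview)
Your proof is correct and follows essentially the same route as the paper: use the sixth equation with $x_6^{(0)}=x_6^{(1)}=0$ to force $x_2^{(0)}=x_3^{(0)}=x_4^{(0)}=0$, observe that these zeros propagate (together with $x_6^{(n)}=0$), and then read off the linear recursion $x_1^{(n+1)}=(1-a_4)x_1^{(n)}$. Your invariance check of the whole face at once is slightly cleaner than the paper's coordinate-by-coordinate recursion, and your explicit justification that $0<a_4<1$ (via $a_3+a_4\le 1$, $a_3>0$) is a small improvement over the paper, which only notes $1-a_4<1$.
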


\begin{proof}
If $x_{6}^{\left(0\right)}=x_{6}^{\left(1\right)}=0$, from the
sixth equation of (\ref{disc}) we deduce $x_{2}^{\left(0\right)}=x_{3}^{\left(0\right)}=x_{4}^{\left(0\right)}=0$
which implies according to the second, third and fourth equations
of (\ref{disc}) $x_{2}^{\left(n\right)}=x_{3}^{\left(n\right)}=x_{4}^{\left(n\right)}=0$
for every $n\geq0$. With these results the sixth equation of
(\ref{disc}) becomes $x_{6}^{\left(n+1\right)}=x_{6}^{\left(n\right)}\left(1-a_{1}x_{1}^{\left(n\right)}\right)$
from $x_{6}^{\left(1\right)}=0$ we deduce recursively that $x_{6}^{\left(n\right)}=0$
for every $n\geq0$. Then the first equation of (\ref{disc})
becomes $x_{1}^{\left(n+1\right)}=\left(1-a_{4}\right)x_{1}^{\left(n\right)}$
which implies $x_{1}^{\left(n\right)}=\left(1-a_{4}\right)^{n}x_{1}^{\left(0\right)}$
with $1-a_{4}<1$ thus $\lim_{n\to\infty}x_{1}^{(n)}=0$ and
therefore $\lim_{n\to\infty}x_{5}^{(n)}=1$.
\end{proof}
{\it Biological interpretation}: Without zooplankton, mixoplankton and bacteria there is no more
production of dissolved inorganic matter. Due to absence of food
the amount of phytoplankton decreases until it disappears, as
the dissolved organic matter is no longer degraded by bacteria,
it accumulates in the environment until it is saturated.

\subsection{Case no dissolved organic matter. }
\begin{prop} For an initial point $\lambda^{(0)}=\left(x_{1}^{(0)},x_{2}^{(0)},x_{3}^{(0)},x_{4}^{(0)},x_{5}^{(0)},x_{6}^{(0)}\right)\in S^{5}$  (except fixed points),
with $x_{5}^{\left(0\right)}=x_{5}^{\left(1\right)}=0$,  the trajectory
has the following limit
\[
\lim_{n\to\infty}V^{(n)}(\lambda^{(0)})=\begin{cases}
\left(0,0,0,0,0,1\right) & \text{if }a_{11}\leq a_{12}\\
\bigl(0,0,0,1-\frac{a_{12}}{a_{11}},0,\frac{a_{12}}{a_{11}}\bigr) & \text{if }a_{11}>a_{12}.
\end{cases}
\]
\end{prop}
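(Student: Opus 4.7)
The plan is to show that the hypothesis $x_5^{(0)} = x_5^{(1)} = 0$ collapses the six-dimensional system to a one-dimensional recurrence in $x_4$ only, and then analyze this logistic-type map.

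First, I would extract the invariant subspace. Writing the fifth equation of (\ref{disc}) at $n = 0$, namely $0 = x_5^{(1)} = x_5^{(0)} + a_4 x_1^{(0)} + a_5 x_2^{(0)} + a_9 x_3^{(0)} - a_{10} x_4^{(0)} x_5^{(0)}$, and using $x_5^{(0)} = 0$ together with strict positivity of $a_4, a_5, a_9$ and non-negativity of coordinates, forces $x_1^{(0)} = x_2^{(0)} = x_3^{(0)} = 0$. The first, second and third equations of (\ref{disc}) show that each of $x_1, x_2, x_3$ is multiplied by a nonnegative factor at every step, so these three coordinates remain zero for all $n \geq 0$. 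Substituting into the fifth equation yields $x_5^{(n+1)} = x_5^{(n)}(1 - a_{10} x_4^{(n)})$, which together with $x_5^{(0)} = 0$ gives $x_5^{(n)} = 0$ for all $n$. Hence the trajectory lies in the edge $\{x_1 = x_2 = x_3 = x_5 = 0\}$, where $x_4^{(n)} + x_6^{(n)} = 1$.

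Next, I would set $u_n := x_4^{(n)}$ so that $x_6^{(n)} = 1 - u_n$, and reduce the fourth equation to
\begin{equation*}
u_{n+1} = u_n \bigl(\alpha - a_{11} u_n\bigr), \qquad \alpha := 1 + a_{11} - a_{12}.
\end{equation*}
The fixed points of this logistic-type map in $[0,1]$ are $u = 0$ and $u^* = 1 - a_{12}/a_{11}$, with $u^*$ lying in $(0,1)$ exactly when $a_{11} > a_{12}$. The key algebraic identities I would use are
\begin{equation*}
u_{n+1} - u_n = a_{11} u_n\bigl(u^* - u_n\bigr), \qquad u_{n+1} - u^* = (u_n - u^*)\bigl(1 - a_{11} u_n\bigr),
\end{equation*}
the second of which is obtained by subtracting the fixed-point equation from the recurrence and factoring. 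Note that $1 - a_{11} u_n \geq 1 - a_{11} \geq 0$ holds on $[0,1]$ by the condition $a_{11} \leq 1$ from (\ref{cond}).

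In the subcase $a_{11} \leq a_{12}$ we have $u^* \leq 0$, and the first identity gives $u_{n+1} \leq u_n$, with strict inequality when $u_n > 0$; monotonicity together with the lower bound $u_n \geq 0$ forces $u_n \to 0$, hence $x_6^{(n)} \to 1$ and the limit is $(0,0,0,0,0,1)$. In the subcase $a_{11} > a_{12}$ the two identities above show that both $[0, u^*]$ and $[u^*, 1]$ are forward invariant, with $u_n$ increasing on the former and decreasing on the latter; in either case $u_n$ is monotone and bounded, hence converges to a fixed point, and the only accessible fixed point from either interval is $u^*$ (zero is repelled since $u_{n+1} > u_n$ whenever $0 < u_n < u^*$). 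Therefore $x_4^{(n)} \to 1 - a_{12}/a_{11}$ and $x_6^{(n)} \to a_{12}/a_{11}$, completing the proof. The only mildly delicate step is verifying that $1 - a_{11} u_n$ remains nonnegative so that the second identity genuinely yields monotone approach to $u^*$ from both sides; this follows immediately from $a_{11} \leq 1$ and is the place where the hypothesis (\ref{cond}) is used.
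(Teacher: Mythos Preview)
Your proof is correct and follows essentially the same approach as the paper: both reduce the hypothesis $x_5^{(0)}=x_5^{(1)}=0$ to the invariant edge $x_1=x_2=x_3=x_5=0$, eliminate $x_6$ via $x_6=1-x_4$, and analyze the resulting one-dimensional logistic-type map $f(x)=x(1+a_{11}-a_{12}-a_{11}x)$ with fixed points $0$ and $1-a_{12}/a_{11}$.

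The only difference is in the analysis of this one-dimensional map. The paper computes $f'$ at each fixed point to classify them as attracting/repelling and then asserts the global limit; you instead use the factorizations $u_{n+1}-u_n=a_{11}u_n(u^*-u_n)$ and $u_{n+1}-u^*=(u_n-u^*)(1-a_{11}u_n)$ to obtain monotonicity and interval invariance directly. Your version is in fact slightly more complete in the case $a_{11}>a_{12}$, since the paper's derivative computation $|f'(u^*)|<1$ only yields \emph{local} attraction, whereas your monotone-convergence argument gives the stated global limit on all of $[0,1]$.
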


\begin{proof}
From $x_{5}^{\left(0\right)}=x_{5}^{\left(1\right)}=0$ and the
fifth equation of (\ref{disc}) we get $x_{1}^{\left(0\right)}=x_{2}^{\left(0\right)}=x_{3}^{\left(0\right)}=0$
which implies recursively according to the first three equations
of (\ref{disc}) that $x_{1}^{\left(n\right)}=x_{2}^{\left(n\right)}=x_{3}^{\left(n\right)}=0$
for any $n\geq0$, consequently from the fifth equation we get
$x_{5}^{\left(n\right)}=0$ for any $n\geq0$. Using these results
the system (\ref{disc}) is reduced to
\[
\left\{ \begin{alignedat}{1}x_{4}^{\left(n+1\right)} & =x_{4}^{\left(n\right)}\left(1-a_{12}+a_{11}x_{6}^{\left(n\right)}\right)\\
x_{6}^{\left(n+1\right)} & =x_{6}^{\left(n\right)}+x_{4}^{\left(n\right)}\left(a_{12}-a_{11}x_{6}^{\left(n\right)}\right).
\end{alignedat}
\right.
\]

We see that $x_{4}^{\left(n+1\right)}+x_{6}^{\left(n+1\right)}=x_{4}^{\left(n\right)}+x_{6}^{\left(n\right)}$
thus $x_{6}^{\left(n\right)}=1-x_{4}^{\left(n\right)}$ for any
$n\geq1$, with this we get $x_{4}^{\left(n+1\right)}=x_{4}^{\left(n\right)}\left(1+a_{11}-a_{12}-a_{11}x_{4}^{\left(n\right)}\right)$.
Consider the function $f(x)=x(1+a_{11}-a_{12}-a_{11}x)$, it
has two fixed points $0$ and $1-\frac{a_{12}}{a_{11}}$. Since
$f'(0)=1-a_{12}+a_{11}$ the fixed point $0$ is attracting if
and only if $a_{11}<a_{12}$, non-hyperbolic if $a_{11}=a_{12}$
and repelling if $a_{11}>a_{12}$. Similarly, by $f'(1-\frac{a_{12}}{a_{11}})=1+a_{12}-a_{11}$
we have that $1-\frac{a_{12}}{a_{11}}$ is attracting if $a_{11}>a_{12}$
and non-hyperbolic if $a_{11}=a_{12}$. Moreover, for $a_{11}\leq a_{12}$
the sequence $x_{4}^{(n)}=f^{n}(x^{(0)})$ is decreasing. Finally
for any $x_{4}^{(0)}\in[0,1]$ we have
\[
\lim_{n\to\infty}f^{(n)}\left(x_{4}^{\left(0\right)}\right)=\begin{cases}
0, & \text{if }a_{11}\leq a_{12}\\
1-\frac{a_{12}}{a_{11}}, & \text{if }a_{11}>a_{12}.
\end{cases}
\]
\end{proof}
{\it Biological interpretation}: The absence of dissolved organic matter is due to the absence
of plankton. When the bacteria disappearance rate is higher than
their consumption rate of dissolved inorganic matter, bacteria
disappear completely. When the comparison of these rates is reversed,
the composition of the environment tends towards a balance between
bacteria and dissolved inorganic matter.

\subsection{Case no bacteria.}
\begin{prop}
For any initial point $\lambda^{(0)}=\left(x_{1}^{(0)},x_{2}^{(0)},x_{3}^{(0)}, 0, x_{5}^{(0)},x_{6}^{(0)}\right)\in S^{5}$  (except fixed points), the trajectory has the following limit
\[
\lim_{n\to\infty}V^{(n)}(\lambda^{(0)})=\left(0,0,0,0,\overline{\lambda},1-\overline{\lambda}\right),
\]

where $\overline{\lambda}=\overline{\lambda}(\lambda^{0})$.
\end{prop}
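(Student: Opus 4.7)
The plan is to exploit a monotonicity argument on the fifth coordinate. First, since $x_{4}^{(0)}=0$, the fourth equation of (\ref{disc}) reads $x_{4}^{(n+1)}=x_{4}^{(n)}\bigl(1-a_{12}+a_{10}x_{5}^{(n)}+a_{11}x_{6}^{(n)}\bigr)$, so by induction $x_{4}^{(n)}=0$ for every $n\geq 0$. With this the fifth equation of (\ref{disc}) simplifies to
\[
x_{5}^{(n+1)}-x_{5}^{(n)} = a_{4}x_{1}^{(n)}+a_{5}x_{2}^{(n)}+a_{9}x_{3}^{(n)} \geq 0,
\]
so $\bigl(x_{5}^{(n)}\bigr)_{n\geq 0}$ is non-decreasing. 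Since $V(S^{5})\subset S^{5}$ by Proposition \ref{pc}, it is also bounded above by $1$, hence converges to some $\overline{\lambda}=\overline{\lambda}(\lambda^{(0)})\in [x_{5}^{(0)},1]$.

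Next I would telescope the previous identity. Summing from $n=0$ to $N-1$ gives
\[
\sum_{n=0}^{N-1}\bigl(a_{4}x_{1}^{(n)}+a_{5}x_{2}^{(n)}+a_{9}x_{3}^{(n)}\bigr) = x_{5}^{(N)}-x_{5}^{(0)}\leq 1,
\]
so the series on the left converges. Because $a_{4}$, $a_{5}$, $a_{9}$ are strictly positive (they are the decomposition rates assumed positive in Section 1), the general term must tend to zero, which forces $x_{1}^{(n)}\to 0$, $x_{2}^{(n)}\to 0$ and $x_{3}^{(n)}\to 0$. Finally, the stochastic constraint $x_{1}^{(n)}+\cdots+x_{6}^{(n)}=1$ together with $x_{4}^{(n)}=0$ yields
\[
x_{6}^{(n)}=1-x_{1}^{(n)}-x_{2}^{(n)}-x_{3}^{(n)}-x_{5}^{(n)}\longrightarrow 1-\overline{\lambda},
\]
which is the claimed limit.

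There is no serious obstacle: the argument relies only on the monotonicity of $x_{5}^{(n)}$, the strict positivity of the three decomposition coefficients, and the simplex invariance. The only thing worth flagging is the consistency check that the candidate limit $(0,0,0,0,\overline{\lambda},1-\overline{\lambda})$ is indeed a fixed point of $V$, which matches the family $\lambda_{1}$ in Proposition \ref{fixp}; this is automatic from the continuity of $V$ combined with the convergence just established.
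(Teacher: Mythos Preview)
Your proof is correct and follows essentially the same line as the paper's: both use $x_{4}^{(n)}\equiv 0$, the monotonicity of $x_{5}^{(n)}$, and then the vanishing of the increment $a_{4}x_{1}^{(n)}+a_{5}x_{2}^{(n)}+a_{9}x_{3}^{(n)}$ to force $x_{1}^{(n)},x_{2}^{(n)},x_{3}^{(n)}\to 0$. The only cosmetic difference is that you phrase the vanishing via a telescoping series, whereas the paper simply notes that a convergent monotone sequence has increments tending to zero.
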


\begin{proof}
When $x_{4}^{\left(0\right)}=0$, according to the fourth equation
of (\ref{disc}) we have $x_{4}^{\left(n\right)}=0$ for all
$n\geq0$. Then $x_{5}^{(n+1)}=x_{5}^{\left(n\right)}+a_{4}x_{1}^{\left(n\right)}+a_{5}x_{2}^{\left(n\right)}+a_{9}x_{3}^{\left(n\right)}\geq x_{5}^{\left(n\right)}$
so, the sequence $x_{5}^{(n)}$ has a limit $\overline{\lambda}$ and $\lim_{n\to\infty}\bigl(a_{4}x_{1}^{\left(n\right)}+a_{5}x_{2}^{\left(n\right)}+a_{9}x_{3}^{\left(n\right)}\bigr)=0$,
but $a_{4}x_{1}^{\left(n\right)}+a_{5}x_{2}^{\left(n\right)}+a_{9}x_{3}^{\left(n\right)}\geq a_{4}x_{1}^{\left(n\right)}\geq0$
which leads to $\lim_{n\to\infty}x_{1}^{\left(n\right)}=0$ and
in a similar way $\lim_{n\to\infty}x_{2}^{\left(n\right)}=\lim_{n\to\infty}x_{3}^{\left(n\right)}=0$.
Then with $x_{6}^{\left(n\right)}=1-\sum_{i=1}^{5}x_{i}^{\left(n\right)}$
we deduce that $\lim_{n\to\infty}x_{6}^{\left(n\right)}=1-\overline{\lambda}$.
\end{proof}
{\it Biological interpretation}: In the absence of bacteria, compartments
between dissolved inorganic and organic matters are not connected,
the trophic network is open. Dissolved organic matter (DOM) resulting
from plankton mortality are no longer are no longer transformed
into dissolved inorganic matter (DIM), the environment is no
longer supplied with DIM. The supply of phytoplankton and mixoplankton
decreases the amount of DIM which causes an increase in plankton
mortality and contributes to increase the amount of DOM until
reaching a point of equilibrium between the amounts of DOM and
DIM.

\subsection{Case no phytoplankton. }
The following lemma is useful.
\begin{lem}
\label{3-lim}  For an initial point $\lambda^{(0)}=\left(x_{1}^{(0)},x_{2}^{(0)},x_{3}^{(0)},x_{4}^{(0)},x_{5}^{(0)},x_{6}^{(0)}\right)\in S^{5}$  (except fixed points), if $x_{4}^{\left(n\right)}$ converges to zero and two among the sequences $x_{1}^{\left(n\right)}$,
$x_{2}^{\left(n\right)}$, $x_{3}^{\left(n\right)}$
have zero limit then remaining (third)  sequence has zero limit and the limit
of the trajectory is the fixed point $\lambda_{1}$.
\end{lem}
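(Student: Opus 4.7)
The plan is to split into three subcases according to which of $x_1^{(n)}, x_2^{(n)}, x_3^{(n)}$ must be shown to vanish, and to deduce the trajectory limit from convergence of $x_5^{(n)}$. The first subcase, when $x_2^{(n)}$ is the unknown (so $x_1^{(n)}, x_3^{(n)}\to 0$), is immediate: the multiplier in $x_2^{(n+1)} = x_2^{(n)}(1 - a_5 - a_6 + a_2 x_1^{(n)})$ is eventually at most $1 - \tfrac{1}{2}(a_5 + a_6) < 1$, yielding geometric convergence $x_2^{(n)} \to 0$.

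For the remaining two subcases (where either $x_3^{(n)}$ or $x_1^{(n)}$ is the unknown), I would telescope the fifth equation of (\ref{disc}):
\[
 x_5^{(N)} - x_5^{(0)} = \sum_{n=0}^{N-1}\bigl(a_4 x_1^{(n)} + a_5 x_2^{(n)} + a_9 x_3^{(n)} - a_{10}x_4^{(n)}x_5^{(n)}\bigr).
\]
Since $x_5^{(N)}\in[0,1]$, this bounds $a_4\sum x_1^{(n)} + a_5\sum x_2^{(n)} + a_9\sum x_3^{(n)}$ by $1 + a_{10}\sum x_4^{(n)}x_5^{(n)}$. Provided the cumulative correction on the right is finite, the missing coordinate is summable and hence tends to zero.

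Once all of $x_1^{(n)}, x_2^{(n)}, x_3^{(n)}, x_4^{(n)}$ converge to zero, conservation gives $x_5^{(n)} + x_6^{(n)} \to 1$. The auxiliary sequence $y_n := x_5^{(n)} + a_{10}\sum_{k=0}^{n-1} x_4^{(k)}x_5^{(k)}$ satisfies $y_{n+1}-y_n = a_4 x_1^{(n)}+a_5 x_2^{(n)}+a_9 x_3^{(n)}\geq 0$, so it is monotone non-decreasing; if bounded above, it converges, forcing $x_5^{(n)}\to\bar\lambda$ for some $\bar\lambda\in[0,1]$ and $x_6^{(n)}\to 1-\bar\lambda$, so the trajectory tends to the fixed point $(0,0,0,0,\bar\lambda,1-\bar\lambda)\in\lambda_1$.

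The main obstacle is establishing the summability $\sum_n x_4^{(n)}x_5^{(n)} < \infty$, which underlies both the vanishing of the unknown plankton variable and the convergence of $x_5^{(n)}$. This is not automatic from $x_4^{(n)}\to 0$: one must combine the fourth equation $x_4^{(n+1)} = x_4^{(n)}(1 - a_{12} + a_{10}x_5^{(n)} + a_{11}x_6^{(n)})$ with the asymptotic identity $x_5^{(n)}+x_6^{(n)}\to 1$ and the parameter constraints (\ref{cond}) in order to show that the multiplier for $x_4$ is eventually bounded below $1$ by a definite amount, forcing geometric decay of $x_4^{(n)}$ and hence summability.
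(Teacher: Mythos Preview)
Your first subcase (showing $x_2^{(n)}\to 0$ when $x_1^{(n)},x_3^{(n)}\to 0$) is correct and in fact cleaner than the paper's treatment. The difficulty is in the other two subcases, where your own ``main obstacle'' is a genuine gap. The proposed route to $\sum_n x_4^{(n)}x_5^{(n)}<\infty$ via ``$x_5^{(n)}+x_6^{(n)}\to 1$ forces the $x_4$-multiplier eventually below~$1$'' is circular: you only obtain $x_5^{(n)}+x_6^{(n)}\to 1$ \emph{after} knowing the third plankton variable vanishes, which is precisely what you are trying to prove. Even granting that limit, the constraints~(\ref{cond}) do not force $1-a_{12}+a_{10}x_5+a_{11}x_6<1$; for instance if $a_{11}>a_{12}$ (allowed by $|a_{11}-a_{12}|\le 1$) the multiplier exceeds~$1$ near $x_5=0$, $x_6=1$. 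So neither summability of $x_4^{(n)}x_5^{(n)}$ nor geometric decay of $x_4^{(n)}$ is available under the lemma's hypotheses, and the telescoping argument stalls.

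The paper takes a much lighter route that sidesteps summability entirely: in each subcase it argues by contradiction, assuming the remaining sequence has a nonzero limit and reading off consequences equation by equation. For example, if $x_1^{(n)},x_2^{(n)}\to 0$ and $x_3^{(n)}\to\mu\neq 0$, the third equation of~(\ref{disc}) forces $x_6^{(n)}\to (a_8+a_9)/a_7$; conservation then gives $x_5^{(n)}$ a limit; passing to the limit in the fifth equation (using $x_1,x_2,x_4\to 0$) yields $a_9\mu=0$, a contradiction. For the subcase with $x_1$ unknown, one uses the first then the sixth equation; for $x_2$ unknown, the second equation alone gives $x_1^{(n)}\to (a_5+a_6)/a_2\neq 0$, contradicting $x_1^{(n)}\to 0$. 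This argument, as written, tacitly assumes the third sequence converges, but its virtue is that it requires no decay rate or summability for $x_4^{(n)}$ --- only $x_4^{(n)}\to 0$.
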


\begin{proof}
-- If $\lim_{n\to\infty}x_{i}^{(n)}=0$ for $i=2,3$.

Suppose that $\lim_{n\to\infty}x_{1}^{(n)}=\nu \neq 0$. Then from
the first equation of (\ref{disc}) we get $\lim_{n\to\infty}x_{6}^{(n)}=\frac{a_{4}}{a_{1}}$,
which by the sixth equation of (\ref{disc})
gives that $a_{1}\nu = 0$, thus we have a contradiction.

\medskip{}

-- If $\lim_{n\to\infty}x_{i}^{(n)}=0$ for $i=1,3.$

Suppose that $\lim_{n\to\infty}x_{2}^{(n)}\neq 0$, then from
the second equation of (\ref{disc}) we deduce $\lim_{n\to\infty}x_{1}^{(n)}=\frac{a_{5}+a_{6}}{a_{2}}\neq 0$, this is
a contradiction.

-- If $\lim_{n\to\infty}x_{i}^{(n)}=0$ for $i=1,2.$

Suppose that $\lim_{n\to\infty}x_{3}^{(n)}=\mu\neq 0$, then it
follows from the third equation of (\ref{disc}) that $\lim_{n\to\infty}x_{6}^{(n)}=\frac{a_{8}+a_{9}}{a_{7}}$.
Now by the equality
$$x_{5}^{\left(n\right)}=1-\sum_{{i=1\atop i\ne 5}}^{6}x_{i}^{\left(n\right)}$$
we deduce that the sequence $x_{5}^{\left(n\right)}$ has a limit. Consequently,
from the fifth equation of (\ref{disc}) we
get $a_{9}\mu=0$, again a contradiction.
\end{proof}
{\it Biological interpretation}: If bacteria and  two species  of plankton disappear then the third plankton also
disappears.

\begin{lem}
\label{lemma2}  For an initial point $\lambda^{(0)}=\left(x_{1}^{(0)},x_{2}^{(0)},x_{3}^{(0)},x_{4}^{(0)},x_{5}^{(0)},x_{6}^{(0)}\right)\in S^{5}$  (except fixed points), if $a_{10}+a_{11}\leq a_{12}$ (resp. $a_{3}+a_{7}\leq a_{8}+a_9$) then $x_{4}^{\left(n\right)}$  (resp.$x_{3}^{\left(n\right)}$) converges to zero.
\end{lem}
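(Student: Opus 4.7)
The plan is to exploit the multiplicative form of the recursions for $x_4^{(n)}$ and $x_3^{(n)}$ coming directly from (\ref{disc}), and to bound the per-step multiplier by a geometric ratio $q<1$ independent of $n$, using only the simplex constraint $x_1+\dots+x_6=1$ (so in particular $x_5+x_6\le 1$ and $x_1+x_6\le 1$) and the strict positivity of the parameters.

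For the first implication, I would start from
\[
x_4^{(n+1)}=x_4^{(n)}\bigl(1-a_{12}+a_{10}x_5^{(n)}+a_{11}x_6^{(n)}\bigr).
\]
Since $x_5^{(n)},x_6^{(n)}\ge 0$ and $x_5^{(n)}+x_6^{(n)}\le 1$, the elementary bound
\[
a_{10}x_5^{(n)}+a_{11}x_6^{(n)}\le \max(a_{10},a_{11})\bigl(x_5^{(n)}+x_6^{(n)}\bigr)\le\max(a_{10},a_{11})
\]
holds. Under the hypothesis $a_{10}+a_{11}\le a_{12}$ together with $a_{10},a_{11}>0$, we get $\max(a_{10},a_{11})\le a_{12}-\min(a_{10},a_{11})$, hence
\[
1-a_{12}+a_{10}x_5^{(n)}+a_{11}x_6^{(n)}\le 1-\min(a_{10},a_{11})=:q<1.
\]
Therefore $x_4^{(n+1)}\le q\,x_4^{(n)}$, and by induction $x_4^{(n)}\le q^n x_4^{(0)}\to 0$.

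For the second implication the argument is identical in shape, applied to the third equation of (\ref{disc}):
\[
x_3^{(n+1)}=x_3^{(n)}\bigl(1-a_{8}-a_{9}+a_{3}x_1^{(n)}+a_{7}x_6^{(n)}\bigr).
\]
Using $x_1^{(n)}+x_6^{(n)}\le 1$ I bound $a_3 x_1^{(n)}+a_7 x_6^{(n)}\le \max(a_3,a_7)$, and the hypothesis $a_3+a_7\le a_8+a_9$ together with $a_3,a_7>0$ gives $\max(a_3,a_7)\le a_8+a_9-\min(a_3,a_7)$, so the multiplier is at most $1-\min(a_3,a_7)<1$ and $x_3^{(n)}$ decays geometrically to $0$.

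The only thing to be a little careful about is to avoid the weaker bound $a_{10}x_5+a_{11}x_6\le a_{10}+a_{11}$, which (under the hypothesis with equality) only gives $\le 1$ for the multiplier and would not force convergence to $0$. The use of $x_5+x_6\le 1$ (rather than bounding each coordinate by $1$ separately) is what produces the \emph{strict} gap, and this is the only subtle point in the proof; everything else is immediate from (\ref{disc}) and the standing positivity of the parameters.
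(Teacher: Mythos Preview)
Your proof is correct and follows the same outline as the paper's --- bound the multiplicative factor in the recursion $x_4^{(n+1)}=x_4^{(n)}\bigl(1-a_{12}+a_{10}x_5^{(n)}+a_{11}x_6^{(n)}\bigr)$ by a quantity strictly less than $1$ --- but your execution is sharper. The paper only observes that $a_{10}x_5^{(n)}+a_{11}x_6^{(n)}<a_{10}+a_{11}\le a_{12}$ at every step (since $x_5=x_6=1$ cannot occur on the simplex), so the multiplier is $<1$; this shows $x_4^{(n)}$ is decreasing, but as written it does not furnish a bound uniform in $n$, and in the borderline case $a_{10}+a_{11}=a_{12}$ the estimate $x_4^{(n)}\le(1+a_{10}+a_{11}-a_{12})^{n}x_4^{(0)}$ that the paper relies on afterwards degenerates to $\le x_4^{(0)}$. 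Your use of the simplex constraint $x_5^{(n)}+x_6^{(n)}\le 1$ to obtain $a_{10}x_5^{(n)}+a_{11}x_6^{(n)}\le\max(a_{10},a_{11})$ produces the uniform ratio $q=1-\min(a_{10},a_{11})<1$ and thus handles the equality case cleanly; this is precisely the ``subtle point'' you flag at the end, and it is a genuine refinement of the paper's argument.
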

\begin{proof}
From the condition $a_{10}+a_{11}\leq a_{12}$ we have  $a_{12}-a_{10}x_{5}^{(0)}-a_{11}x_{6}^{(0)}>0,$ because, $x_5=x_6=1$ never occurs, and by Proposition \ref{pc} it follows $a_{12}\leq1,$ from this we get  $a_{12}-a_{10}x_{5}^{(0)}-a_{11}x_{6}^{(0)}\leq1,$ it means that $0\leq 1-(a_{12}-a_{10}x_{5}^{(0)}-a_{11}x_{6}^{(0)})<1,$ so we have

$$\lim_{n\to\infty}x_{4}^{(n)}=0.$$
Similarly, if $a_{3}+a_{7}\leq a_{8}+a_9$ then $\lim_{n\to\infty}x_{3}^{(n)}=0.$
\end{proof}

\begin{prop}
\label{k1} If  $a_{7}\leq a_{8}+a_{9}$
and $a_{10}+a_{11}\leq a_{12}$ then for any initial point $\lambda^{(0)}=\left(0,x_{2}^{0},x_{3}^{0},x_{4}^{0},x_{5}^{0},x_{6}^{0}\right)\in S^{5}$
(except fixed points), the trajectory has the following
limit
\[
\lim_{n\to\infty}V^{(n)}(\lambda^{(0)})=\bigl(0,0,0,0,\overline{\lambda},1-\overline{\lambda}\bigr).
\]
where $\overline{\lambda}=\overline{\lambda}(\lambda^{(0)}).$
\end{prop}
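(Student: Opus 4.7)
The plan is to combine the two previously established lemmas (Lemmas \ref{3-lim} and \ref{lemma2}) after reducing (\ref{disc}) along the invariant set $\{x_1 = 0\}$. Since $x_1^{(0)}=0$, the first equation of (\ref{disc}) has the form $x_1^{(n+1)} = x_1^{(n)} \cdot (\cdots)$, so $x_1^{(n)} = 0$ for every $n \geq 0$. Substituting into the second equation gives $x_2^{(n+1)} = (1-a_5-a_6)\, x_2^{(n)}$; since $0 < 1-a_5-a_6 < 1$ by (\ref{cond}) and the positivity of $a_5, a_6$, the sequence $x_2^{(n)}$ decays geometrically to $0$.

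Next, the hypothesis $a_{10}+a_{11}\leq a_{12}$ is precisely the assumption of Lemma \ref{lemma2}, which yields $x_4^{(n)} \to 0$. This places us in the setting of Lemma \ref{3-lim}: $x_4^{(n)} \to 0$ and two of the sequences $x_1, x_2, x_3$ (namely $x_1$ and $x_2$) have zero limit. Applying Lemma \ref{3-lim} then produces $x_3^{(n)} \to 0$ together with convergence of the whole trajectory to a fixed point of the form $\lambda_1 = (0,0,0,0,\overline{\lambda}, 1-\overline{\lambda})$, where $\overline{\lambda}$ depends on $\lambda^{(0)}$.

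The second hypothesis $a_7 \leq a_8 + a_9$ plays a clarifying monotonicity role. With $x_1^{(n)}=0$, the third equation of (\ref{disc}) collapses to $x_3^{(n+1)} = x_3^{(n)}\bigl(1-a_8-a_9+a_7\, x_6^{(n)}\bigr)$, and since $x_6^{(n)} \leq 1$ this multiplier is at most $1$ under the hypothesis; thus $x_3^{(n)}$ is monotone non-increasing and automatically possesses a limit $\mu \geq 0$. This monotonicity legitimises the step in Lemma \ref{3-lim} that takes the limit of $x_3^{(n)}$; alternatively, it lets one bypass the lemma: if $\mu > 0$, then the third equation forces $x_6^{(n)} \to (a_8+a_9)/a_7$, hence $x_5^{(n)}$ also converges (via $x_5 = 1 - \sum_{i \neq 5} x_i$), and the fifth equation taken to the limit gives $a_9\mu = 0$, contradicting $a_9 > 0$.

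The main obstacle I anticipate is not the vanishing of the first four coordinates but the individual convergence of $x_5^{(n)}$ (and hence $x_6^{(n)}$), for only the sum $x_5^{(n)}+x_6^{(n)} \to 1$ is automatic from the stochasticity. In the present argument this is delegated to the conclusion of Lemma \ref{3-lim}, which asserts convergence of the whole trajectory to a point of the $\lambda_1$-family. Once $x_5^{(n)} \to \overline{\lambda}$ is secured in this way, the simplex constraint immediately gives $x_6^{(n)} \to 1-\overline{\lambda}$ and the proposition follows.
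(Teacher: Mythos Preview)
Your argument tracks the paper's proof almost step for step: reduce to $x_1^{(n)}\equiv 0$, get $x_2^{(n)}\to 0$ geometrically, invoke Lemma~\ref{lemma2} for $x_4^{(n)}\to 0$, and use $a_7\le a_8+a_9$ to make $x_3^{(n)}$ monotone so that Lemma~\ref{3-lim} applies. The one substantive difference is in how the convergence of $x_5^{(n)}$ is handled. You delegate it to the conclusion of Lemma~\ref{3-lim}; the paper instead supplies a direct Cauchy estimate,
\[
\bigl|x_5^{(m)}-x_5^{(n)}\bigr|\le a_5\sum_{k\ge n}x_2^{(k)}+a_9\sum_{k\ge n}x_3^{(k)}+a_{10}\sum_{k\ge n}x_4^{(k)},
\]
bounding the right-hand side by the geometric tails $(1-a_5-a_6)^k$, $(1+a_7-a_8-a_9)^k$, $(1+a_{10}+a_{11}-a_{12})^k$. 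This matters because the proof of Lemma~\ref{3-lim} only produces the convergence of $x_5^{(n)}$ inside the contradiction branch (where $\mu\neq 0$); once $\mu=0$ is established, nothing in that lemma's proof guarantees $x_5^{(n)}$ converges. So the paper's explicit Cauchy computation is precisely what closes the gap you flagged as the ``main obstacle''; your proposal is correct modulo that one outsourced step, which you would need to fill in along the paper's lines.
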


\begin{proof}
If $x_{1}^{\left(0\right)}=0$ according to (\ref{disc}) we
have $x_{1}^{\left(n\right)}=0$ for all $n\geq0$. Then the
operator $V$ becomes
\begin{equation}
V:\left\{ \begin{alignedat}{1}x'_{2} & =x_{2}\left(1-a_{5}-a_{6}\right)\\
x'_{3} & =x_{3}\left(1-a_{8}-a_{9}+a_{7}x_{6}\right)\\
x'_{4} & =x_{4}\left(1-a_{12}+a_{10}x_{5}+a_{11}x_{6}\right)\\
x'_{5} & =x_{5}+a_{5}x_{2}+a_{9}x_{3}-a_{10}x_{4}x_{5}\\
x'_{6} & =x_{6}\left(1-a_{7}x_{3}-a_{11}x_{4}\right)+a_{6}x_{2}+a_{8}x_{3}+a_{12}x_{4}
\end{alignedat}
\right.\label{c1}
\end{equation}

We have $x_{2}^{\left(n+1\right)}=x_{2}^{\left(0\right)}\left(1-a_{5}-a_{6}\right)^{n}$
since $1-a_{5}-a_{6}<1$ the sequence $x_{2}^{(n)}$ has limit
$\lim_{n\to\infty}x_{2}^{(n)}=0$. Moreover, from condition $a_{10}+a_{11}\leq a_{12}$ and according Lemma \ref{lemma2} we have $\lim_{n\to\infty}x_{4}^{(n)}=0.$   \medskip{}

 If $a_{7}\leq a_{8}+a_{9}$ then  $x_{3}^{\left(1\right)}\leq x_{3}^{\left(0\right)}\left(1-(a_{8}+a_{9}-a_7x_6)\right)\leq x_{3}^{(0)}.$ So the sequence $x_{3}^{\left(n\right)}$ has limit and according Lemma \ref{3-lim} it has zero limit.

From the fourth equation of the system (\ref{c1}) we deduce
\[
\left|x_{5}^{\left(n+1\right)}-x_{5}^{\left(n\right)}\right|\leq a_{5}x_{2}^{\left(n\right)}+a_{9}x_{3}^{\left(n\right)}+a_{10}x_{4}^{\left(n\right)}.
\]

Then for all $m>n$ we have
\[
\left|x_{5}^{\left(m\right)}-x_{5}^{\left(n\right)}\right|\leq\sum_{k=n}^{m-1}\left|x_{5}^{\left(k+1\right)}-x_{5}^{\left(k\right)}\right|\leq a_{5}\sum_{k=n}^{m}x_{2}^{\left(k\right)}+a_{9}\sum_{k=n}^{m}x_{3}^{\left(k\right)}+a_{10}\sum_{k=n}^{m}x_{4}^{\left(k\right)}
\]
using the upper bounds of sequences $x_{2}^{\left(n\right)}$,
$x_{3}^{\left(n\right)}$, $x_{4}^{\left(n\right)}$ obtained
previously, we get
\begin{align*}
\left|x_{5}^{\left(m\right)}-x_{5}^{\left(n\right)}\right| & \leq a_{5}x_{2}^{\left(0\right)}\sum_{k=n}^{+\infty}\left(1-a_{5}-a_{6}\right)^{k}+a_{9}x_{3}^{\left(0\right)}\sum_{k=n}^{+\infty}\left(1+a_{7}-a_{8}-a_{9}\right)^{k}+\\
 & \qquad a_{10}x_{4}^{\left(0\right)}\sum_{k=n}^{+\infty}\left(1+a_{10}+a_{11}-a_{12}\right)^{n}
\end{align*}
from this we deduce that $x_{5}^{\left(n\right)}$ is a Cauchy
sequence, thus we can say that this sequence has limit $\overline{\lambda}$,
but it always depends the initial point $\lambda^{(0)}.$ In this case
we know that operator called \emph{regular}, i.e., every initial
point has its limit point.

\medskip{}

\end{proof}
{\it Biological interpretation}: If no phytoplankton
and if death rates of mixoplankton and bacteria are higher than
their consumption rates of dissolved inorganic compounds, then
after some time the populations of zooplankton, mixoplankton
and bacteria gradually disappears and finally it stay only dissolved
inorganic and organic matters, of course they depend the initial
amount of each other organisms.

\subsection{Case neither zooplankton nor mixoplankton. }
\begin{prop}
If  $a_{1}\leq a_{4}$
and $a_{10}+a_{11}\leq a_{12}$ then for any initial point $\lambda^{(0)}=\left(x_{1}^{(0)},0,0,x_{4}^{(0)},x_{5}^{(0)},x_{6}^{(0)}\right)\in S^{5}$
(except fixed points), the trajectory has the following
limit
\[
\lim_{n\to\infty}V^{(n)}(\lambda^{(0)})=\left(0,0,0,0,\overline{\lambda},1-\overline{\lambda}\right),
\]
where $\overline{\lambda}=\overline{\lambda}(\lambda^{(0)}).$
\end{prop}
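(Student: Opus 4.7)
The plan is to mirror the argument of Proposition \ref{k1}, now with zooplankton and mixoplankton (rather than phytoplankton) absent throughout. Since $x_2^{(0)}=x_3^{(0)}=0$, the second and third equations of (\ref{disc}) force $x_2^{(n)}=x_3^{(n)}=0$ for every $n\geq 0$, so the dynamics restricts to the invariant face $\{x_2=x_3=0\}$, on which only the four equations for $x_1,x_4,x_5,x_6$ remain active.

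Next I would extract decay from the two hypotheses. The condition $a_{10}+a_{11}\leq a_{12}$ gives $\lim_{n\to\infty}x_4^{(n)}=0$ directly from Lemma \ref{lemma2}. The condition $a_1\leq a_4$ controls $x_1$: the first equation of (\ref{disc}) yields
\[
x_1^{(n+1)}=x_1^{(n)}\bigl(1-a_4+a_1x_6^{(n)}\bigr)\leq x_1^{(n)}(1-a_4+a_1)\leq x_1^{(n)},
\]
so $x_1^{(n)}$ is non-increasing and hence convergent. Lemma \ref{3-lim}, applied with the identically zero sequences $x_2^{(n)},x_3^{(n)}$ and the sequence $x_4^{(n)}$ tending to zero, then forces $x_1^{(n)}\to 0$ and identifies the limit of the trajectory as a member of the family $\lambda_1=(0,0,0,0,\overline\lambda,1-\overline\lambda)$.

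What remains is to show that $x_5^{(n)}$ itself converges (not merely that its subsequential limits lie in $\lambda_1$), which will pin down the particular value $\overline\lambda=\overline\lambda(\lambda^{(0)})$. I would copy the Cauchy argument at the end of Proposition \ref{k1}: from the fifth equation of (\ref{disc}) I obtain
\[
\bigl|x_5^{(n+1)}-x_5^{(n)}\bigr|\leq a_4 x_1^{(n)}+a_{10}x_4^{(n)},
\]
and the geometric majorants $x_1^{(n)}\leq(1-a_4+a_1)^n x_1^{(0)}$ and $x_4^{(n)}\leq(1+a_{10}+a_{11}-a_{12})^n x_4^{(0)}$ obtained by iterating the two one-step contraction inequalities turn a telescoping estimate into a convergent geometric bound for $|x_5^{(m)}-x_5^{(n)}|$; thus $x_5^{(n)}$ is Cauchy and converges to some $\overline\lambda$, and finally $x_6^{(n)}=1-x_1^{(n)}-x_4^{(n)}-x_5^{(n)}\to 1-\overline\lambda$.

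The technical subtlety I expect is the borderline case $a_1=a_4$ or $a_{10}+a_{11}=a_{12}$, where the geometric ratios above equal $1$ and the Cauchy step is not automatic. I would handle this as in Proposition \ref{k1}: since $\lambda^{(0)}$ is explicitly excluded from the fixed points, $x_6^{(n)}$ cannot sit at the value $1$, nor can $a_{10}x_5^{(n)}+a_{11}x_6^{(n)}$ saturate its upper bound for all $n$, so after finitely many iterations the multipliers $1-a_4+a_1 x_6^{(n)}$ and $1-a_{12}+a_{10}x_5^{(n)}+a_{11}x_6^{(n)}$ fall strictly below $1$, restoring a genuine geometric contraction and closing the argument.
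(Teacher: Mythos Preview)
Your argument is essentially the paper's: restrict to the invariant face $x_2=x_3=0$, invoke Lemma~\ref{lemma2} for $x_4^{(n)}\to 0$, note that $a_1\le a_4$ makes $x_1^{(n)}$ non-increasing, apply Lemma~\ref{3-lim} to force $x_1^{(n)}\to 0$, and close with a Cauchy estimate. The single substantive difference is the variable chosen for that last step. You bound $|x_5^{(n+1)}-x_5^{(n)}|\le a_4x_1^{(n)}+a_{10}x_4^{(n)}$, which needs summable geometric control on \emph{both} $x_1^{(n)}$ and $x_4^{(n)}$ and hence pushes you into the separate borderline discussion for $a_1=a_4$; the paper instead runs the Cauchy argument on $x_6$, using $|x_6^{(n+1)}-x_6^{(n)}|\le a_{12}x_4^{(n)}$ from the last equation of the reduced system and summing only the geometric tail for $x_4^{(n)}$, so the case $a_1=a_4$ never has to be singled out.
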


\begin{proof}
If $x_{2}^{\left(0\right)}=x_{3}^{\left(0\right)}=0$ then $x_{2}^{\left(n\right)}=x_{3}^{\left(n\right)}=0$
for all $n\geq0$, the system (\ref{disc}) becomes

\begin{equation}
\left\{ \begin{alignedat}{1}x'_{1} & =x_{1}\left(1-a_{4}+a_{1}x_{6}\right)\\
x'_{4} & =x_{4}\left(1-a_{12}+a_{10}x_{5}+a_{11}x_{6}\right)\\
x'_{5} & =x_{5}+a_{4}x_{1}-a_{10}x_{4}x_{5}\\
x'_{6} & =x_{6}\left(1-a_{1}x_{1}-a_{11}x_{4}\right)+a_{12}x_{4}
\end{alignedat}
\right.\label{c1-1}
\end{equation}

By the condition $a_{10}+a_{11}\leq a_{12}$ using Lemma \ref{lemma2} we have $\lim_{n\to\infty}x_{4}^{(n)}=0.$
If  $a_{1}\leq a_{4}$  then $x_{1}^{\left(1\right)}\leq\left(1-(a_{4}-a_{1}x_6)\right)x_{1}^{\left(0\right)}\leq x_{1}^{\left(0\right)}$ so the sequence  $x_{1}^{\left(n\right)}$ has limit. Thus, by Lemma \ref{3-lim} we have $\lim_{n\to\infty}x_{1}^{(n)}=0.$
From the fourth equation of (\ref{c1-1}) we get
\[
\left|x_{6}^{\left(n+1\right)}-x_{6}^{\left(n\right)}\right|\leq a_{12}x_{4}^{\left(n\right)}\leq a_{12}\left(1+a_{10}+a_{11}-a_{12}\right)^{n}x_{4}^{\left(0\right)},
\]
therefore for any $m>n$ we have
\[
\left|x_{6}^{\left(m\right)}-x_{6}^{\left(n\right)}\right|\leq a_{12}\sum_{k=n}^{+\infty}a_{12}\left(1+a_{10}+a_{11}-a_{12}\right)^{k}x_{4}^{\left(0\right)},
\]
so $x_{6}^{\left(n\right)}$ is a Cauchy sequence thus the sequence
$x_{6}^{\left(n\right)}$ has a limit.\medskip{}

\end{proof}
{\it Biological interpretation}: Without zooplankton and mixoplankton, if the consumption rates
of dissolved inorganic compounds of phytoplankton and bacteria
is less than their death rates, then after some time the populations
of phytoplankton and bacteria gradually disappears, ultimately
it only stays in the environment dissolved inorganic and organic
matters whose proportions whose proportions depend of course
on the initial conditions.

\subsection{Cases with three types of plankton and bacteria at initial stage.}

We formulate a proposition for general case where all coordinates of an initial point are non zero.
\begin{prop}
If $a_{1}\leq a_{4}$, $a_{2}\leq a_{5}+a_{6}$, $a_{3}+a_{7}\leq a_{8}+a_{9}$,
$a_{10}+a_{11}\leq a_{12}$ then  for any
initial point $\lambda^{(0)}=(x_{1}^{(0)},x_{2}^{(0)},x_{3}^{(0)},x_{4}^{(0)},x_{5}^{(0)},x_{6}^{(0)})\in S^{5}$ (except fixed points)
 the following holds
\[
\lim_{n\to\infty}{V^{(n)}}(\lambda^{(0)})=(0,0,0,0,\overline{\lambda},1-\overline{\lambda}),
\]
where $\overline{\lambda}=\overline{\lambda}(\lambda^{(0)}).$
\end{prop}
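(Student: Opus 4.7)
The plan is to prove the proposition in three steps: first establish that each of $x_{1}^{(n)},x_{2}^{(n)},x_{3}^{(n)},x_{4}^{(n)}$ is monotone nonincreasing, hence convergent; then use a suitably chosen auxiliary monotone sequence built from $x_{5}^{(n)}$ to force those four limits to be zero and simultaneously yield convergence of $x_{5}^{(n)}$; and finally read off the limit of $x_{6}^{(n)}$ from the simplex constraint.

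For step one, I compute the ratios $x_{i}^{(n+1)}/x_{i}^{(n)}$ from (\ref{disc}) for $i=1,2,3,4$, which are respectively $1-a_{4}+a_{1}x_{6}^{(n)}-a_{2}x_{2}^{(n)}-a_{3}x_{3}^{(n)}$, $1-a_{5}-a_{6}+a_{2}x_{1}^{(n)}$, $1-a_{8}-a_{9}+a_{3}x_{1}^{(n)}+a_{7}x_{6}^{(n)}$ and $1-a_{12}+a_{10}x_{5}^{(n)}+a_{11}x_{6}^{(n)}$. Each of the four hypotheses, together with $x_{j}^{(n)}\in[0,1]$, makes the corresponding ratio at most $1$, so each of the sequences is nonincreasing and nonnegative, hence convergent to some $\alpha_{i}\geq 0$. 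By Lemma \ref{lemma2} we have $\alpha_{4}=0$, and the sharper bound $a_{10}x_{5}^{(n)}+a_{11}x_{6}^{(n)}\leq\max(a_{10},a_{11})<a_{10}+a_{11}\leq a_{12}$ delivers a uniform contraction rate strictly less than $1$, so $x_{4}^{(n)}$ decays geometrically and in particular $\sum_{n\geq 0}x_{4}^{(n)}<\infty$.

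For step two, I introduce the auxiliary sequence
\[
y^{(n)}:=x_{5}^{(n)}+a_{10}\sum_{k=0}^{n-1}x_{4}^{(k)}x_{5}^{(k)}.
\]
A direct computation using the fifth equation of (\ref{disc}) gives $y^{(n+1)}-y^{(n)}=a_{4}x_{1}^{(n)}+a_{5}x_{2}^{(n)}+a_{9}x_{3}^{(n)}\geq 0$, so $y^{(n)}$ is nondecreasing. It is bounded above by $1+a_{10}\sum_{k\geq 0}x_{4}^{(k)}<\infty$, hence converges. The telescoping increments therefore satisfy $a_{4}x_{1}^{(n)}+a_{5}x_{2}^{(n)}+a_{9}x_{3}^{(n)}\to 0$, and since $a_{4},a_{5},a_{9}>0$ this forces $\alpha_{1}=\alpha_{2}=\alpha_{3}=0$. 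From the identity $x_{5}^{(n)}=y^{(n)}-a_{10}\sum_{k=0}^{n-1}x_{4}^{(k)}x_{5}^{(k)}$ and the absolute convergence of the corrective sum, $x_{5}^{(n)}$ converges to some $\overline{\lambda}\in[0,1]$ depending on $\lambda^{(0)}$; the simplex identity then yields $x_{6}^{(n)}\to 1-\overline{\lambda}$.

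The main obstacle I foresee is step two, specifically identifying the right auxiliary sequence. Monotonicity alone does not rule out $\alpha_{1},\alpha_{2},\alpha_{3}>0$: in the borderline situations (for instance $a_{1}=a_{4}$) the corresponding ratio estimate is not strict, so a positive equilibrium limit would be a priori compatible with the inequalities. The trick of absorbing the negative loss term $-a_{10}x_{4}^{(n)}x_{5}^{(n)}$ into a telescoping correction, which stays bounded exactly because $x_{4}^{(n)}$ is summable, turns the evolution of $x_{5}^{(n)}$ into a Lyapunov-type monotone bounded quantity whose convergence simultaneously yields $\alpha_{1}=\alpha_{2}=\alpha_{3}=0$ and the convergence of $x_{5}^{(n)}$, which is the content of the statement.
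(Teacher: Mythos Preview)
Your proof is correct and takes a genuinely different route from the paper. The paper first invokes Lemma~\ref{lemma2} to dispose of $x_{3}^{(n)}$ and $x_{4}^{(n)}$, then splits into four sub-cases according to whether each of $a_{1}\leq a_{4}$ and $a_{2}\leq a_{5}+a_{6}$ is strict or an equality; in the strict sub-cases geometric decay is immediate, while the equality sub-cases are handled by the contradiction arguments of Lemma~\ref{3-lim}. Your argument bypasses this case analysis entirely by building the Lyapunov-type quantity $y^{(n)}=x_{5}^{(n)}+a_{10}\sum_{k<n}x_{4}^{(k)}x_{5}^{(k)}$: monotonicity and boundedness of $y^{(n)}$ force the telescoping increments $a_{4}x_{1}^{(n)}+a_{5}x_{2}^{(n)}+a_{9}x_{3}^{(n)}$ to vanish, which kills $x_{1},x_{2},x_{3}$ uniformly in all borderline cases, and the same construction simultaneously gives convergence of $x_{5}^{(n)}$. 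This last point is a real gain: the paper's proof of the present proposition never explicitly establishes that $x_{5}^{(n)}$ converges (the Cauchy estimate appears only in the special case of Proposition~\ref{k1}), whereas your auxiliary sequence delivers it for free. The key enabling observation, that the uniform bound $a_{10}x_{5}+a_{11}x_{6}\leq\max(a_{10},a_{11})<a_{12}$ makes $x_{4}^{(n)}$ summable rather than merely null, is what allows the corrective sum to stay finite.
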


\begin{proof}
From conditions $a_{3}+a_{7}\leq a_{8}+a_{9}$, $a_{10}+a_{11}\leq a_{12}$ according Lemma \ref{lemma2} we obtain $x_{3}^{(n)}$ and $x_{4}^{(n)}$ have zero limits. We will study the 4 cases according to whether the inequalities $a_{1}\leq a_{4}$, $a_{2}\leq a_{5}+a_{6}$  are equalities or strict inequalities.\medskip{}

If $a_{1}\leq a_{4}$, $a_{2}\leq a_{5}+a_{6}$,  we deduce from the first two equations
of (\ref{disc}) the following inequalities
\begin{align}
\begin{aligned}x_{1}^{(1)} & \leq\left(1+a_{1}-a_{4}\right)x_{1},\\
\end{aligned}
\quad & \begin{aligned}x_{2}^{(1)} & \leq\left(1+a_{2}-a_{5}-a_{6}\right)x_{2},\\
\end{aligned}
\label{ineq-1}
\end{align}

If $a_{1}=a_{4}$, from (\ref{ineq-1}) the sequence $x_{1}^{\left(n\right)}$
is decreasing so it has a limit noted $\mu_{1}$. Analogously,
if $a_{2}=a_{5}+a_{6}$ the sequence $x_{2}^{\left(n\right)}$
has a limit $\mu_{2}$.\medskip{}

\emph{Case 1}: Two strict inequalities.

If $a_{1}<a_{4}$, $a_{2}<a_{5}+a_{6}$ from the first two equations of (\ref{disc})
we get
\begin{align}
\begin{aligned}x_{1}^{(n)} & \leq\left(1+a_{1}-a_{4}\right)^{n}x_{1}^{(0)},\\
\end{aligned}
\quad & \begin{aligned}x_{2}^{(n)} & \leq\left(1+a_{2}-a_{5}-a_{6}\right)^{n}x_{2}^{(0)}.\\
\end{aligned}
\label{ineq-2}
\end{align}
 So the sequences $x_{1}^{(n)},x_{2}^{(n)}$ converge to zero, and the limit of the trajectory is the fixed
point of type $\lambda_{1}$ (cf. Proposition \ref{fp}).\medskip{}

\emph{Case 2}: One equality and one strict inequality.

In each of the two possible cases, one
of the sequences $x_{1}^{\left(n\right)}$, $x_{2}^{\left(n\right)}$
 has limit zero, therefore according to Lemma \ref{3-lim}, the remaining
sequence also has limit zero, thus the limit of the trajectory is the fixed point of type $\lambda_{1}$.\medskip{}

\emph{Case 3}: Two equalities.

3.1. If $a_{1}=a_{4}$ and $a_{2}=a_{5}+a_{6}$.

We suppose $\mu_{1}\neq0$ and $\mu_{2}\neq0$, otherwise according
to Lemma \ref{3-lim} we have $\lim_{n\to\infty}x_{i}^{(n)}=0$
for $1\leq i\leq4$ and the limit of the operator $V$ is the
fixed point of type $\lambda_{1}$. Then from the second equation
of (\ref{disc}) we get $\mu_{1}=\frac{a_{5}+a_{6}}{a_{2}}=1$
and thus $\mu_{2}=0$, this is contradiction.
Consequently,  the limit of the trajectory is the fixed point of type $\lambda_{1}$.\medskip{}

\end{proof}

\section{Conjecture for more general cases.}

Recall that $\lambda_{2}, \lambda_{3}, \lambda_{4}$ are the fixed points defined in Proposition \ref{fixp}.

Based on the numerical analysis, the explicit form of
the operator and the fixed points we make the following conjectures:

{\bf Conjecture 1.}
Let the initial point $\lambda^{(0)}=(x_{1}^{(0)}, x_{2}^{(0)}, x_{3}^{(0)}, x_{4}^{(0)}, x_{5}^{(0)}, x_{6}^{(0)})$, with $x_{4}^{0}>0$ and
  $a_{11}>a_{12}$,  $a_{3}\leq a_{7}\leq a_{8}+a_{9}$ (or $x_{3}^{(0)}=0$).
Then

(1) If $x_{1}^{(0)}>0,\ \ a_{4}<a_{1},$ and $a_{12}a_{1}-a_{11}a_{4}>0,\ \ a_{10}a_{1}-a_{10}a_{4}-a_{12}a_{1}+a_{11}a_{4}>0$
then

\[
\lim_{n\to\infty}{V^{(n)}}(\lambda^{(0)})=\lambda_{4}.
\]

(2) If $a_{12}a_{1}-a_{11}a_{4}\leq0$ then
\[
\lim_{n\to\infty}{V^{(n)}}(\lambda^{(0)})=\lambda_{2}.
\]

{\bf Conjecture 2.}
If $a_{11}>a_{12}$, $a_{7}>a_{8}+a_{9}$, $a_{1}\leq a_{4}$ (or $x_{1}^{(0)}=0$),  $a_{12}a_{7}-a_{11}(a_{8}+a_{9})>0$, $a_{10}(a_{7}-(a_{8}+a_{9}))-(a_{12}a_{7}-a_{11}(a_{8}+a_{9}))>0$,
 $x_{3}^{(0)}>0$, $x_{4}^{(0)}>0$,
then
\[
\lim_{n\to\infty}{V^{(n)}}(\lambda^{(0)})=\lambda_{3}.
\]

We note that for the other cases an investigation of the limit behavior
of the operator (\ref{disc}) seems very difficult.

\section*{Acknowledgements}

Shoyimardonov thanks the "El-Yurt Umidi" Foundation under the
Cabinet of Ministers of the Republic of Uzbekistan for financial
support during his visit to the Institut Montpelliérain Alexander
Grothendieck in University of Montpellier (France) and prof.
R. Varro for the invitation.

\end{document}